\documentclass[pdflatex,sn-mathphys-num]{sn-jnl}

\usepackage{amsmath,amssymb,amsfonts}
\usepackage{amsthm}
\usepackage{mathtools}
\usepackage{booktabs}
\usepackage{multirow}
\usepackage{tabularx}
\usepackage{array}
\usepackage{xcolor}
\usepackage{graphicx}
\usepackage{algorithm}
\usepackage[noend]{algpseudocode}
\usepackage{enumitem}
\usepackage{tikz}
\usepackage{graphicx}
\usetikzlibrary{arrows.meta,positioning,fit}
\usepackage{comment}

\theoremstyle{thmstyleone}
\newtheorem{theorem}{Theorem}
\newtheorem{proposition}[theorem]{Proposition}
\theoremstyle{thmstyletwo}

\theoremstyle{thmstylethree}

\begin{document}

\title[Multi-Channel Geopolitical Supply Chain Reconfiguration]{Robust global supply chain reconfiguration under tariff and geopolitical uncertainty}

\author*[1]{\fnm{Ashish} \sur{Chandra}}\email{achand6@ilstu.edu}
\affil*[1]{\orgdiv{College of Business}, \orgname{Illinois State University}, \orgaddress{\city{Normal}, \postcode{61761}, \state{IL}, \country{USA}}}

\abstract{Global supply chains are increasingly exposed to trade policy shocks
and physical disruptions arising from common geopolitical events. We develop
a robust optimization framework with two decision stages for strategic supply
chain reconfiguration. Supplier qualification and transportation arc
activation decisions are made before uncertainty is realized, while
operational flows adapt afterward. Geopolitical uncertainty is represented
through country level shocks that can jointly trigger tariff escalation,
policy induced restrictions on cross border capacity, facility disruptions,
and physical transportation failures. The resulting min max min problem is
reformulated by dualizing the operational recourse problem, yielding a
nonconvex mixed integer bilinear adversarial problem solved within a column
and constraint generation procedure. Comparison with complete enumeration
verifies the implementation on small instances, while experiments on 60
synthetic supply chain networks of varying sizes assess computational
performance. The results show that omitting geopolitical transmission
channels during strategic design can lead to substantial deterioration in
cost and service under full geopolitical stress. Broader protection is
achieved primarily through changes in transportation network configuration
rather than increased supplier qualification. Robust performance also
depends on the severity and composition of geopolitical consequences, with
restrictions on cross border capacity exerting a stronger effect than
comparable proportional changes in tariff severity over the tested ranges.
A geographically grounded case analysis further shows that the value of
individual uncertainty channels depends on network structure and geographic
exposure.}

\keywords{robust optimization; supply chain resilience; geopolitical risk;
trade policy uncertainty; network reconfiguration; column and constraint generation}

\maketitle

\section{Introduction}\label{sec:intro}

Global supply chains are increasingly exposed to shifts in international
trade policy that can alter sourcing costs, manufacturing footprints,
transportation routes, and market access. Tariff escalation, retaliatory
measures, sanctions, export controls, and border restrictions can require
firms to reconsider strategic sourcing and logistics decisions made under
substantially different trade conditions. At the same time, the geopolitical
events underlying such policy changes may also disrupt facilities or
transportation connections. Consequently, strategic supply chain
reconfiguration requires consideration not only of changes in the cost of
cross border trade, but also of the multiple operational consequences through
which the same geopolitical event may affect the network.

Supply chain network design under uncertainty has received extensive
attention in the operations research literature. Existing studies
employ stochastic, robust, and related optimization approaches to
account for uncertain demand, supply, capacity, and disruption risk
\cite{Govindan2017,Aldrighetti2021}. Robust optimization has also
been applied to integrated network configuration and operational
decisions, including facility location, inventory management, and
transportation routing \cite{ShangZhangJiaAlmanaseer2022, Goli_2023}.
Within the resilience literature, facility failures, supply and
demand interruptions, and investments in protection or redundancy
have been incorporated into strategic network design models
\cite{Jabbarzadeh2016,HuTangTian2025,LuoLiZhao2025}. Lotfi et al.~\cite{LotfiShoushtariAli2025} develop a
bilevel supply chain network design model incorporating
risk, robustness, and resilience strategies.
Yildiz et al.~\cite{YildizYoonTalluriHo2016} examine the tradeoff
between network cost and reliability, while Liu et
al.~\cite{LiuXiWang2023} investigate supply chain resilience
strategies under budget constraints. More recently, Yildiz et
al.~\cite{YildizYanEtAl2025} examine the efficiency of combining
multiple risk mitigation strategies under complex disruption
patterns. Dolgui et al.~\cite{DolguiIvanovSimchiLevi2025}
emphasize the role of supply chain stress testing in assessing
resilience and viability under disruptions. Together, these
studies highlight the importance of network configuration,
operational flexibility, and systematic evaluation of
resilience strategies under adverse conditions.

A related literature examines international sourcing
decisions and the effects of trade policy interventions
on global supply chains. Kandil et
al.~\cite{KandilBattaiaHammami2020} review analytical
models for sourcing and location decisions in the context
of globalization and changing international trade
conditions. Dong and Kouvelis \cite{DongKouvelis2020}
discuss the implications of tariffs for global supply
chain network configuration, while Grossman et
al.~\cite{GrossmanHelpmanRedding2024} examine how
unexpected tariffs affect supplier relationships and
sourcing decisions. Nagurney et
al.~\cite{NagurneyBesikNagurney2019} analyze tariff rate
quotas in competitive global supply chain networks, while
Feng et al.~\cite{FengEtAl2022} investigate the effects
of trade policies on supply chain network equilibrium.
Nagurney and Samadi \cite{NagurneySamadi2025} extend
the equilibrium perspective by incorporating ad valorem
tariffs and labor constraints.

Recent research also emphasizes the broader consequences
of tariff uncertainty and geopolitical disruptions.
Ivanov and Dolgui \cite{IvanovDolgui2025} discuss the
ripple effects and deep uncertainty associated with
tariff shocks, while Ivanov \cite{Ivanov2026} develops
a decision support framework for responding to tariff
induced uncertainty and disruptions. Cruz \cite{Cruz2026}
examines tariff uncertainty and risk aversion in a
multiperiod supply chain equilibrium model. Particularly
related to the present study, Ahmadi Digehsara et
al.~\cite{AhmadiDigehsara2025} develop a robust
optimization framework with two decision stages for
geopolitical disruptions in which strategic alliances
influence exposure to tariff increases and quota based
trade restrictions.

These streams provide sophisticated treatments of physical disruption risk
and trade policy interventions, but an important modeling issue arises when
different consequences originate from a common geopolitical event. Changes
in trade costs or capacity and disruptions to facilities or transportation
need not constitute independent sources of uncertainty. They may instead
represent alternative or simultaneous consequences of the same country level
shock. Treating these consequences as independent can admit combinations that
do not reflect their geographic origin, whereas representing only one
consequence channel can omit other ways in which the same geopolitical event
affects the network. This motivates an uncertainty architecture that
separates the geographic origin of geopolitical uncertainty from its
consequences for the supply chain and explicitly controls which combinations
of consequences may occur jointly.

To address this issue, we develop a robust optimization model with two
decision stages for strategic global supply chain reconfiguration under
geopolitical uncertainty with multiple consequence channels. Before
uncertainty is realized, the firm qualifies suppliers and activates
transportation arcs, thereby determining the sourcing and routing alternatives
available to the network. After the geopolitical realization is observed,
operational flows adapt over the resulting network. Country level shocks
determine the geographic exposure of network components, while separate
consequence channels represent their effects on trade conditions, facility
availability, and transportation connectivity. Budgets for individual
channels regulate the extent of each type of consequence, while a joint
budget controls their simultaneous occurrence.

The paper makes three main contributions. First, we develop an uncertainty architecture in which geopolitical shocks
originate at the country level and may propagate through multiple trade
policy and physical disruption channels. Restricted versions of the
uncertainty set are obtained by selectively excluding consequence channels.
This common structure allows us to compare strategic designs that consider
only tariff escalation, tariff escalation together with restrictions on cross
border capacity, or physical disruptions to facilities and transportation
with designs that consider all four consequence channels. It therefore
provides a systematic basis for assessing the implications of omitting
particular transmission channels from strategic network design.
Second, we integrate supplier qualification and transportation arc activation
with adaptive operational recourse in a unified robust model for supply chain
reconfiguration. Supplier qualification creates alternative sourcing options,
while transportation arc activation determines the routing alternatives
available after uncertainty is realized. The model therefore captures two
complementary mechanisms for strategic resilience and their interaction with
adaptive operational decisions.

Third, we develop a column and constraint generation solution approach for
the resulting robust optimization problem and conduct a systematic
computational evaluation of the proposed framework. Comparison with complete
enumeration verifies the implementation on small instances. We then compare
strategic configurations developed under different uncertainty assumptions
by exposing each configuration to the same full range of geopolitical
consequences. This analysis quantifies the implications of omitting particular
transmission channels during strategic design. Additional experiments examine
how the results vary with the structure and severity of geopolitical
uncertainty and with the geographic characteristics of the network.

The remainder of the paper is organized as follows.
Section~\ref{sec:problem} describes the decision setting and develops the
structure of geopolitical uncertainty. Section~\ref{sec:model} presents the
robust optimization model and defines the restricted uncertainty
environments. Section~\ref{sec:reformulation} develops the reformulation and
solution procedure. Section~\ref{sec:computational} presents the computational
study and discusses the resulting insights. Section~\ref{sec:conclusion}
concludes the paper.

% TODO: Insert the final verified literature citations and detailed positioning before submission.

%%%%%%%%%%%%%%%%%%%%%%%%%%%%%%%%%%%%%%%%%%%%%%%%%%%%%%%%%%%%%%%%%%%%%%%%%%%%%%%%
\section{Problem setting and geopolitical uncertainty}
\label{sec:problem}
%%%%%%%%%%%%%%%%%%%%%%%%%%%%%%%%%%%%%%%%%%%%%%%%%%%%%%%%%%%%%%%%%%%%%%%%%%%%%%%%

We consider a firm that strategically configures a global supply chain in the
presence of geopolitical uncertainty. The supply chain consists of four
echelons: suppliers, manufacturing plants, warehouses, and retailers located
across multiple countries. Material flows sequentially from suppliers to
plants, from plants to warehouses, and from warehouses to retailers. Because
facilities and transportation connections span national boundaries, the
network is exposed to changes in trade conditions as well as physical
disruptions associated with geopolitical events.

The firm selects its strategic network configuration before geopolitical
uncertainty is realized, incurring costs for supplier qualification and
transportation arc activation. Once geopolitical conditions are observed,
material flows are adjusted within the available network capacities.
Demand that cannot be served because of capacity limitations, trade
restrictions, or physical disruptions is represented as unmet demand and
incurs a penalty cost. The objective is to determine a strategic
configuration that minimizes the sum of the initial reconfiguration cost
and the worst case operating cost under admissible geopolitical
realizations.

%The firm makes decisions in two stages. Before geopolitical uncertainty is realized, it determines the strategic configuration of the network by qualifying suppliers and activating transportation arcs. These decisions determine the sourcing and routing alternatives available during subsequent operations and incur qualification and activation costs. After geopolitical conditions are realized, the firm adapts material flows over the available network. Demand that cannot be served because of capacity limitations, trade restrictions, or physical disruptions is represented as unmet demand and incurs a penalty cost. The objective is to select a strategic configuration that minimizes the sum of the initial reconfiguration cost and the worst case operating cost after uncertainty is realized.

The defining feature of the uncertainty structure is the distinction between
the geographic origin of a geopolitical event and its consequences for the
supply chain. A country level shock represents an adverse geopolitical event,
such as a trade dispute, economic sanction, political instability, or
regional conflict, associated with a particular country. Such a shock creates
exposure for facilities located in that country and for transportation arcs
connected to it. It does not require every exposed component to experience an
adverse consequence. Instead, the country shock establishes which network
components are eligible to be affected, while separate consequence variables
and uncertainty budgets determine which adverse effects are realized.

Four consequence channels are considered. The trade cost channel represents
tariff escalation on international transportation arcs. The trade capacity
channel represents reductions in usable international transportation capacity
resulting from measures such as quotas, sanctions, export controls, or border
restrictions. The facility channel represents disruptions to suppliers,
manufacturing plants, and warehouses. The transportation channel represents
physical failures of transportation arcs. These channels distinguish changes
in the economic and regulatory conditions of international trade from losses
in the physical availability of facilities and transportation connections.

Dependence among these consequences arises from their common geographic
exposure. A facility consequence can occur only when the country hosting that
facility is shocked, while a consequence affecting a transportation arc can
occur only when at least one of its endpoint countries is shocked. Budgets
for individual consequence channels limit the extent of each type of
disruption or trade intervention. A joint budget additionally limits the
composition of consequences that may occur simultaneously across channels.
The resulting construction retains the flexibility of budgeted robust
optimization while preventing consequences from being assigned to network
components that have no exposure to a selected country shock.

%
% ------------------------------------------------------------
% Conceptual framework figure
% ------------------------------------------------------------
% ------------------------------------------------------------
% Figure
% ------------------------------------------------------------
\begin{figure}[htbp]
\centering

\resizebox{0.92\linewidth}{!}{%
\begin{tikzpicture}[
    font=\small,
    stage/.style={
        draw,
        rounded corners=2pt,
        align=center,
        text width=8.4cm,
        minimum height=1.0cm,
        inner sep=6pt
    },
    channel/.style={
        draw,
        rounded corners=2pt,
        align=center,
        text width=2.45cm,
        minimum height=2.05cm,
        inner sep=4pt
    },
    outer/.style={
        draw,
        rounded corners=3pt,
        inner sep=4pt
    },
    arrow/.style={
        -{Latex[length=2.2mm]},
        thick
    }
]

% ------------------------------------------------------------
% Stage 1
% ------------------------------------------------------------

\node[stage] (stage1) {
\textbf{First stage strategic network configuration}\\[2pt]
Supplier qualification and transportation-arc activation
};

% ------------------------------------------------------------
% Country-level geopolitical shocks
% ------------------------------------------------------------

\node[
    stage,
    below=1.0cm of stage1,
    text width=11.0cm,
    minimum height=0.85cm
] (shock) {
\textbf{Country-level geopolitical shocks}\\[2pt]
Trade disputes, sanctions, political instability, and regional conflict
};

\draw[arrow] (stage1.south) -- (shock.north);

% ------------------------------------------------------------
% Title inside the grouped box
% ------------------------------------------------------------

\node[
    below=1.0cm of shock,
    font=\small\bfseries,
    align=center
] (multititle) {
Multi-channel geopolitical consequences
};

% ------------------------------------------------------------
% Invisible center anchor for the four-box row
% ------------------------------------------------------------

\coordinate (rowcenter) at ([yshift=-1.35cm]multititle.south);

% ------------------------------------------------------------
% Four channels in ONE centered horizontal row
% ------------------------------------------------------------

\node[
    channel,
    anchor=east
] (tradecost) at ([xshift=-3.95cm]rowcenter) {
\textbf{Trade-cost channel}\\[4pt]
Tariff escalation
};

\node[
    channel,
    anchor=east,
    text width=3.5cm,
    minimum height=1.45cm
] (tradecap) at ([xshift=0.2cm]rowcenter) {
\textbf{Trade-capacity channel}\\[4pt]
Quotas, sanctions, export controls, and border restrictions
};

\node[
    channel,
    anchor=west
] (facility) at ([xshift=0.7cm]rowcenter) {
\textbf{Facility channel}\\[4pt]
Supplier, plant, and warehouse disruptions
};

\node[
    channel,
    anchor=west
] (transport) at ([xshift=3.95cm]rowcenter) {
\textbf{Transportation channel}\\[4pt]
Physical transportation-arc failures
};

% ------------------------------------------------------------
% Outer grouped box
% ------------------------------------------------------------

\node[
    outer,
    fit=(multititle)(tradecost)(tradecap)(facility)(transport)
] (channelblock) {};

% ------------------------------------------------------------
% Straight vertical arrow into grouped box
% ------------------------------------------------------------

\draw[arrow] (shock.south) -- (channelblock.north);

% ------------------------------------------------------------
% Stage 2
% ------------------------------------------------------------

\node[
    stage,
    below=1.0cm of channelblock
] (stage2) {
\textbf{Second-stage adaptive operations}\\[2pt]
Sourcing, production, transportation, distribution, and unmet demand
};

% ------------------------------------------------------------
% Straight vertical arrow out of grouped box
% ------------------------------------------------------------

\draw[arrow] (channelblock.south) -- (stage2.north);

\end{tikzpicture}%
}

\caption{Decision sequence \& propagation of geopolitical uncertainty
in supply chain reconfiguration.}

\label{fig:framework}

\end{figure}

Figure~\ref{fig:framework} summarizes the decision sequence and the
propagation of geopolitical uncertainty. The firm first selects its strategic
network configuration. An admissible geopolitical realization then determines
the affected countries and the consequences that occur across the four
channels, subject to the geographic coupling conditions and uncertainty
budgets. After observing the realization, the firm adapts its operational
flows over the resulting network. Section~\ref{sec:model} formalizes this
decision structure and the associated uncertainty set.

%%%%%%%%%%%%%%%%%%%%%%%%%%%%%%%%%%%%%%%%%%%%%%%%%%%%%%%%%%%%%%%%%%%%%%%%%%%%%%%%
\section{Robust optimization model}
\label{sec:model}
%%%%%%%%%%%%%%%%%%%%%%%%%%%%%%%%%%%%%%%%%%%%%%%%%%%%%%%%%%%%%%%%%%%%%%%%%%%%%%%%

This section formalizes the decision framework introduced in
Section~\ref{sec:problem}. We first define the network, model parameters, and
decision variables. We then formulate the geopolitical uncertainty set and
the operational recourse problem before presenting the complete robust
optimization model. Finally, we define restricted uncertainty environments
that isolate selected consequence channels.

%%%%%%%%%%%%%%%%%%%%%%%%%%%%%%%%%%%%%%%%%%%%%%%%%%%%%%%%%%%%%%%%%%%%%%%%%%%%%%%%
\subsection{Network structure, parameters, and decisions}
\label{sec:network}
%%%%%%%%%%%%%%%%%%%%%%%%%%%%%%%%%%%%%%%%%%%%%%%%%%%%%%%%%%%%%%%%%%%%%%%%%%%%%%%%

The supply chain is represented by a directed network
$\mathcal{G}=(N,E)$ with four echelons. The node set is $N=S\cup P\cup W\cup R$,
where $S$, $P$, $W$, and $R$ denote the sets of suppliers, manufacturing
plants, warehouses, and retailers, respectively. Material flows between
adjacent echelons, so $E=E^{SP}\cup E^{PW}\cup E^{WR}$, 
where $E^{SP}\subseteq S\times P$, $E^{PW}\subseteq P\times W$, and
$E^{WR}\subseteq W\times R$ contain the arcs connecting suppliers to plants,
plants to warehouses, and warehouses to retailers, respectively.
Let $C$ denote the set of countries represented in the network, and let
$\kappa(i)\in C$ identify the country in which node $i\in N$ is located.
Transportation arcs are classified according to the countries of their
endpoints:
\begin{equation}
\label{eq:international_domestic_link}
E^{I} = \{(i,j)\in E:\kappa(i)\neq\kappa(j)\},
\qquad E^{D} = \{(i,j)\in E:\kappa(i)=\kappa(j)\}
= E\setminus E^{I}.
\end{equation}
The sets $E^{I}$ and $E^{D}$ contain the international and domestic
transportation arcs, respectively. Tariff escalation and restrictions on
usable cross border capacity apply only to arcs in $E^{I}$, whereas physical
transportation disruptions may affect arcs in either set.
Supplier $s\in S$, manufacturing plant $p\in P$, and warehouse $w\in W$
have nominal capacities $U_s$, $U_p$, and $U_w$, respectively. Each
transportation arc $(i,j)\in E$ has nominal capacity $u_{ij}$ and unit
operating cost $c_{ij}$, excluding tariff payments. Each retailer $r\in R$
has demand $D_r$, and each unit of unmet demand at retailer $r$ incurs penalty
cost $\pi_r$.
For each international arc $(i,j)\in E^{I}$, let
$\underline{\tau}_{ij}$ denote the tariff rate under normal trade conditions
and let $\overline{\tau}_{ij}$ denote the elevated tariff rate under tariff
escalation. Both are ad valorem rates expressed as proportions of the customs
value of the goods shipped. Define the tariff increment as
\begin{equation}
\label{eq:tariff_equation}
\Delta\tau_{ij} = \overline{\tau}_{ij}
- \underline{\tau}_{ij},
\qquad (i,j)\in E^{I}.
\end{equation}
Let $V_{ij}$ denote the customs value per unit shipped on international arc
$(i,j)\in E^{I}$. The tariff payment per unit shipped on an international arc
is therefore the realized tariff rate multiplied by $V_{ij}$.
Restrictions on usable international transportation capacity are
parameterized by $\delta_{ij}\in[0,1]$ for $(i,j)\in E^{I}$, where
$\delta_{ij}$ denotes the fraction of nominal capacity lost when the
corresponding trade capacity restriction occurs. Thus,
$\delta_{ij}=0$ represents no capacity loss, $\delta_{ij}=1$ represents
complete loss of usable capacity through this channel, and values strictly
between zero and one represent partial capacity reductions.
Finally, $F_s$ denotes the cost of qualifying supplier $s\in S$, and
$K_{ij}$ denotes the cost of activating transportation arc $(i,j)\in E$.
The principal sets and parameters are summarized in
Table~\ref{tab:notation}.
\begin{table}[htbp]
\centering
\caption{Principal sets and model parameters.}
\label{tab:notation}
\small
\renewcommand{\arraystretch}{1.12}
\setlength{\tabcolsep}{6pt}

\begin{tabular}{p{0.22\linewidth} p{0.70\linewidth}}
\toprule
\textbf{Notation} & \textbf{Description} \\
\midrule

\multicolumn{2}{l}{\textit{\textbf{Sets and indices:}}} \\[1pt]

$S, P, W, R$
& Sets of suppliers, plants, warehouses, and retailers,
respectively \\

$C$
& Set of countries represented in the supply chain \\

$N$
& Set of supply chain nodes, $N=S\cup P\cup W\cup R$ \\

$E^{SP}, E^{PW}, E^{WR}$
& Sets of arcs connecting $S \rightarrow P$, $P \rightarrow W$, and
$W \rightarrow R$ respectively \\

$E$
& Set of all transportation arcs,
$E=E^{SP}\cup E^{PW}\cup E^{WR}$ \\

$E^{I}, E^{D}$
& Sets of international \& domestic transportation arcs, respectively \\

$\kappa(i)$
& Country in which node $i\in N$ is located \\

\hline
\addlinespace[3pt]
\multicolumn{2}{l}{\textit{\textbf{Network and demand parameters:}}} \\[1pt]

$U_s, U_p, U_w$
& Nominal capacities of $s\in S$, $p\in P$, and $w\in W$,
respectively \\

$u_{ij}$
& Nominal capacity of transportation arc $(i,j)\in E$ \\

$c_{ij}$
& Unit operating cost on arc $(i,j)\in E$,
excluding tariff payments \\

$D_r$
& Demand at retailer $r\in R$ \\

$\pi_r$
& Penalty cost per unit of unmet demand at retailer $r\in R$ \\

\hline
\addlinespace[3pt]
\multicolumn{2}{l}{\textit{\textbf{Trade policy parameters:}}} \\[1pt]

$\underline{\tau}_{ij}, \overline{\tau}_{ij}$
& Baseline and elevated tariff rates on international arc $(i,j)\in E^{I}$ \\

$\Delta\tau_{ij}$
& Tariff-rate increment,
$\Delta\tau_{ij}=\overline{\tau}_{ij}-\underline{\tau}_{ij}$ \\

$V_{ij}$
& Customs value per unit shipped on international arc $(i,j)\in E^{I}$ \\

$\delta_{ij}$
& Fraction of nominal capacity lost under a trade capacity restriction on
international arc $(i,j)\in E^{I}$ \\

\hline
\addlinespace[3pt]
\multicolumn{2}{l}{\textit{\textbf{Strategic cost parameters:}}} \\[1pt]

$F_s$
& Cost of qualifying supplier $s\in S$ \\

$K_{ij}$
& Cost of activating transportation arc $(i,j)\in E$ \\

\bottomrule
\end{tabular}

\vspace{2pt}
\begin{minipage}{0.92\linewidth}
\footnotesize
All capacities, demands, operating costs, qualification costs,
activation costs, and unmet demand penalties are assumed to be
nonnegative. Tariff rates and customs values are also nonnegative,
with $\overline{\tau}_{ij}\geq\underline{\tau}_{ij}$ for every
$(i,j)\in E^{I}$.
\end{minipage}

\end{table}

The strategic network configuration is selected before geopolitical
uncertainty is realized. For each supplier $s\in S$, let
$y_s\in\{0,1\}$ equal one if supplier $s$ is qualified and zero otherwise.
For each transportation arc $(i,j)\in E$, let $v_{ij}\in\{0,1\}$ equal one
if the arc is activated and zero otherwise. Define the feasible set of
strategic decisions as
\begin{equation}
\mathcal{Y}
=
\left\{
(y,v):
\begin{array}{l}
y_s\in\{0,1\}, \quad s\in S,\\
v_{ij}\in\{0,1\}, \quad (i,j)\in E,\\
v_{sp}\leq y_s, \quad (s,p)\in E^{SP}
\end{array}
\right\},
\label{eq:first_stage_domain}
\end{equation}
where $v_{sp}\leq y_s$ ensures that an arc connecting supplier $s$ to plant
$p$ can be activated only when supplier $s$ is qualified. Thus,
$(y,v)\in\mathcal{Y}$ determines the suppliers and transportation arcs
available for subsequent operations. The corresponding strategic
reconfiguration cost is
\begin{equation}
\label{eq:config_cost}
C^{R}(y,v)
=
\sum_{s\in S}F_s y_s
+
\sum_{(i,j)\in E}K_{ij}v_{ij}.
\end{equation}
After geopolitical uncertainty is realized, operational flows adapt to the
resulting facility availability, transportation availability, and trade
conditions. For each arc $(i,j)\in E$, let $x_{ij}\geq 0$ denote the
quantity shipped on that arc after the uncertainty realization is observed.
For each retailer $r\in R$, let $\ell_r\geq 0$ denote unmet demand. The
variables $(y,v)$ are therefore strategic decisions made before uncertainty
is observed, whereas $(x,\ell)$ are adaptive operational decisions determined
after the geopolitical realization. The uncertainty variables governing that
realization are defined next.

%%%%%%%%%%%%%%%%%%%%%%%%%%%%%%%%%%%%%%%%%%%%%%%%%%%%%%%%%%%%%%%%%%%%%%%%%%%%%%%%
\subsection{Multi-channel geopolitical uncertainty set}
\label{sec:uncertainty}
%%%%%%%%%%%%%%%%%%%%%%%%%%%%%%%%%%%%%%%%%%%%%%%%%%%%%%%%%%%%%%%%%%%%%%%%%%%%%%%%

The multi-channel geopolitical uncertainty described in
Section~\ref{sec:problem} is formalized through a country-aware uncertainty
set. For each country $c\in C$, define the binary country-shock variable $g_c \in \{0,1\}$, such that  $g_c = 1$, if country $c$ is affected by a geopolitical shock and zero otherwise.
A geopolitical shock represents an adverse country-level event capable of
altering trade conditions or disrupting supply chain operations associated
with that country. The set of simultaneously affected countries is restricted
by
\begin{equation}
\sum_{c\in C}g_c\leq\Gamma_G,
\label{eq:country_budget}
\end{equation}
where $\Gamma_G$ denotes the maximum number of countries that may be affected
in an admissible uncertainty realization. The consequences of the country-level shocks are represented by six families of binary uncertainty variables. For suppliers, manufacturing plants, and warehouses, respectively, we define the binary facility-disruption variables as:
\begin{equation}
a_s=
\begin{cases}
1, & \text{if $s$ disrupts},\\
0, & \text{otherwise},
\end{cases}
\quad
b_p=
\begin{cases}
1, & \text{if $p$ disrupts},\\
0, & \text{otherwise},
\end{cases}
\quad
d_w=
\begin{cases}
1, & \text{if $w$ disrupts},\\
0, & \text{otherwise},
\end{cases}
\label{eq:facility_disruption_variables}
\end{equation}
for $s\in S$, $p\in P$, and $w\in W$. For transportation and trade-policy consequences, let $z_{ij}\in\{0,1\}$, $(i,j)\in E$, denote the physical-disruption indicator, with $z_{ij}=1$ if transportation arc $(i,j)$ is physically unavailable and $z_{ij}=0$ otherwise. For each international arc $(i,j)\in E^{I}$, let $q_{ij},h_{ij}\in\{0,1\}$. The variable $q_{ij}$ identifies the applicable tariff regime, with $q_{ij}=0$ corresponding to the baseline tariff rate $\underline{\tau}_{ij}$ and $q_{ij}=1$ to the elevated tariff rate $\overline{\tau}_{ij}$. The variable $h_{ij}$ indicates whether a
policy-induced cross-border capacity restriction is active, with $h_{ij}=1$
when the restriction applies and $h_{ij}=0$ otherwise.
The variables $z_{ij}$ and $h_{ij}$ represent distinct disruption mechanisms.
While $z_{ij}=1$ renders transportation arc $(i,j)$ physically unavailable,
$h_{ij}=1$ activates the fractional cross-border capacity reduction
$\delta_{ij}$ defined in Section~\ref{sec:network}. Thus, $h_{ij}$ may reduce
the usable capacity of an international arc without physically disabling the
underlying transportation connection.

The network-level consequences are linked to the country-shock variables
through the location mapping $\kappa(\cdot)$ defined in
Section~\ref{sec:network}. Facility disruptions are restricted by
\begin{equation}
a_s \leq g_{\kappa(s)} \ \forall  s\in S; \quad 
b_p \leq g_{\kappa(p)} \ \forall  p\in P; \quad 
d_w \leq g_{\kappa(w)}  \ \forall  w\in W,
\label{eq:country_facility}
\end{equation}
while physical transportation failures and trade-policy consequences satisfy
\begin{align}
z_{ij}
&\leq g_{\kappa(i)}+g_{\kappa(j)},
&& (i,j)\in E,
\label{eq:country_transport}\\
q_{ij}
&\leq g_{\kappa(i)}+g_{\kappa(j)},
&& (i,j)\in E^{I},
\label{eq:country_tariff}\\
h_{ij}
&\leq g_{\kappa(i)}+g_{\kappa(j)},
&& (i,j)\in E^{I}.
\label{eq:country_tradecapacity}
\end{align}
Constraint~\eqref{eq:country_facility} models the geographical dependence of
facility disruptions by permitting a supplier, plant, or warehouse to be
disrupted only when its host country experiences a geopolitical shock.
Constraint~\eqref{eq:country_transport} extends this dependence to the
transportation network: physical failure of arc $(i,j)$ is admissible only
when at least one of its endpoint countries is affected.
Constraints~\eqref{eq:country_tariff} and
\eqref{eq:country_tradecapacity} impose the corresponding geographical
conditions on the two trade-policy channels, so that tariff escalation or a
policy-induced capacity restriction on an international arc can occur only
when its origin or destination country is affected. These implications are
intentionally one-directional. In particular, $g_c=1$ identifies the
facilities and arcs exposed to a shock in country $c$ but does not require
every exposed component to experience an adverse consequence. The formulation
therefore permits heterogeneous network-level consequences conditional on the
same country-level shock realization. Different consequence channels may also affect the same international transportation arc simultaneously, provided that the geographic coupling constraints and uncertainty budgets are satisfied.
The extent of the admissible consequences is further controlled through
channel-specific uncertainty budgets. Facility and physical transportation
disruptions satisfy
\begin{align}
\sum_{s\in S}a_s &\leq \Gamma_S,
&
\sum_{p\in P}b_p &\leq \Gamma_P,
&
\sum_{w\in W}d_w &\leq \Gamma_W,
\label{eq:facility_budgets}\\
\sum_{(s,p)\in E^{SP}}z_{sp} &\leq \Gamma_{SP},
&
\sum_{(p,w)\in E^{PW}}z_{pw} &\leq \Gamma_{PW},
&
\sum_{(w,r)\in E^{WR}}z_{wr} &\leq \Gamma_{WR},
\label{eq:transport_budgets}
\end{align}
and the trade-policy channels satisfy
\begin{equation}
\sum_{(i,j)\in E^{I}}q_{ij}\leq\Gamma_T,
\qquad
\sum_{(i,j)\in E^{I}}h_{ij}\leq\Gamma_H.
\label{eq:trade_budgets}
\end{equation}
Constraint~\eqref{eq:facility_budgets} bounds the numbers of simultaneously
disrupted suppliers, plants, and warehouses by $\Gamma_S$, $\Gamma_P$, and
$\Gamma_W$, respectively. Constraint~\eqref{eq:transport_budgets} limits
physical transportation failures separately within the supplier--plant,
plant--warehouse, and warehouse--retailer echelons through
$\Gamma_{SP}$, $\Gamma_{PW}$, and $\Gamma_{WR}$, respectively. This
echelon-specific structure allows the uncertainty set to represent different
levels of physical disruption exposure across the three transportation
layers. Constraint~\eqref{eq:trade_budgets} bounds the numbers of
international arcs simultaneously subject to tariff escalation and
policy-induced capacity restrictions by $\Gamma_T$ and $\Gamma_H$
respectively.

The preceding constraints regulate the extent of each consequence type
separately. To additionally limit their simultaneous occurrence across
channels, we impose the aggregate uncertainty budget:
\begin{equation}
\omega_F \Bigl(\sum_{s\in S}a_s + \sum_{p\in P}b_p + \sum_{w\in W}d_w
+ \sum_{(i,j)\in E}z_{ij} \Bigr)
+ \omega_T\sum_{(i,j)\in E^{I}}q_{ij}
+ \omega_H\sum_{(i,j)\in E^{I}}h_{ij}
\leq \Gamma_J.
\label{eq:joint_budget}
\end{equation}
The parameters $\omega_F$, $\omega_T$, and $\omega_H$ specify the
amount of the aggregate uncertainty budget consumed by a physical
disruption, tariff escalation, and trade capacity restriction,
respectively. Constraint~\eqref{eq:joint_budget} therefore limits the
combinations of adverse consequences that may occur simultaneously
across the different channels. %The individual budgets in \eqref{eq:facility_budgets} through \eqref{eq:trade_budgets} restrict the number of consequences within each category, while the aggregate budget restricts their combined occurrence.
%
%
%
%
%Together with the country-level budget in Constraint~\eqref{eq:country_budget}, these restrictions define the admissible geopolitical scenarios. 
Let $\xi=(g,a,b,d,z,q,h)$ denote a complete uncertainty realization. The full country-aware multi-channel uncertainty set is obtained as:
\begin{equation}
\mathcal{U}^{F}
=
\left\{
\xi:
\begin{array}{l}
g_c,a_s,b_p,d_w,z_{ij},q_{ij},h_{ij}\in\{0,1\},\\[1mm]
\eqref{eq:country_budget},
\eqref{eq:country_facility}-\eqref{eq:joint_budget}\ \text{are satisfied}
\end{array}
\right\},
\label{eq:uncertainty_set}
\end{equation}
where $c\in C$, $s\in S$, $p\in P$, $w\in W$, and $(i,j)\in E$ for $z_{ij}$, while $(i,j)\in E^{I}$ for $q_{ij}$ and $h_{ij}$.
%Accordingly, $g$, $(a,b,d)$, $z$, $q$, and $h$ represent the country-shock, facility-disruption, physical transportation-disruption, tariff-escalation, and policy-induced cross-border capacity-restriction variables, respectively.
%
The uncertainty set~\eqref{eq:uncertainty_set} distinguishes the
geographic origin of geopolitical uncertainty, represented by $g$,
from its consequences for the supply chain, represented by
$(a,b,d,z,q,h)$. %Constraint~\eqref{eq:country_budget} limits the number of countries affected by geopolitical shocks, while Constraints~\eqref{eq:country_facility} through \eqref{eq:country_tradecapacity} determine which network consequences are geographically admissible. Constraints~\eqref{eq:facility_budgets} through \eqref{eq:trade_budgets} limit the number of consequences within individual categories, and Constraint~\eqref{eq:joint_budget} restricts their simultaneous occurrence across categories. 
For a fixed realization $\xi\in\mathcal{U}^{F}$, the corresponding operational recourse problem is formulated in Section~\ref{sec:recourse}.

%%%%%%%%%%%%%%%%%%%%%%%%%%%%%%%%%%%%%%%%%%%%%%%%%%%%%%%%%%%%%%%%%%%%%%%%%%%%%%%%
\subsection{Operational recourse model}
\label{sec:recourse}
%%%%%%%%%%%%%%%%%%%%%%%%%%%%%%%%%%%%%%%%%%%%%%%%%%%%%%%%%%%%%%%%%%%%%%%%%%%%%%%%

For a given uncertainty realization $\xi\in\mathcal{U}^{F}$, we define the
total second-stage operating cost associated with recourse decisions $(x,\ell)$ as:
\begin{equation}
C^{O}(x,\ell;\xi)
=
\sum_{(i,j)\in E}c_{ij}x_{ij}
+
\sum_{(i,j)\in E^{I}}
\left(
\underline{\tau}_{ij}
+
\Delta\tau_{ij}q_{ij}
\right)V_{ij}x_{ij}
+
\sum_{r\in R}\pi_r\ell_r.
\label{eq:operating_cost}
\end{equation}
For fixed first-stage decisions $(y,v)$ and an uncertainty realization
$\xi\in\mathcal{U}^{F}$, the firm determines the recourse decisions
$(x,\ell)$ that minimize~\eqref{eq:operating_cost} subject to the realized
facility, transportation, and trade-policy conditions. The resulting
operational recourse problem is
\begin{align}
Q(y,v,\xi)
=
\min_{x,\ell}\quad
& C^{O}(x,\ell;\xi)
\label{eq:recourse_obj}\\
\text{s.t.}\quad
&
\sum_{p:(s,p)\in E^{SP}}x_{sp}
\leq
U_s y_s(1-a_s),
&& s\in S,
\label{eq:supplier_capacity}\\
&
\sum_{s:(s,p)\in E^{SP}}x_{sp}
=
\sum_{w:(p,w)\in E^{PW}}x_{pw},
&& p\in P,
\label{eq:plant_balance}\\
&
\sum_{w:(p,w)\in E^{PW}}x_{pw}
\leq
U_p(1-b_p),
&& p\in P,
\label{eq:plant_capacity}\\
&
\sum_{p:(p,w)\in E^{PW}}x_{pw}
=
\sum_{r:(w,r)\in E^{WR}}x_{wr},
&& w\in W,
\label{eq:warehouse_balance}\\
&
\sum_{r:(w,r)\in E^{WR}}x_{wr}
\leq
U_w(1-d_w),
&& w\in W,
\label{eq:warehouse_capacity}\\
&
\sum_{w:(w,r)\in E^{WR}}x_{wr}
+
\ell_r
=
D_r,
&& r\in R,
\label{eq:demand_balance}\\
&
x_{ij}
\leq
u_{ij}v_{ij}(1-z_{ij}),
&& (i,j)\in E,
\label{eq:physical_capacity}\\
&
x_{ij}
\leq
u_{ij}v_{ij}
\left(1-\delta_{ij}h_{ij}\right),
&& (i,j)\in E^{I},
\label{eq:trade_capacity}\\
&
x_{ij}\geq0,
&& (i,j)\in E,
\label{eq:flow_nonnegative}\\
&
\ell_r\geq0,
&& r\in R.
\label{eq:shortage_bounds}
\end{align}
%
%
%
%
%
%
\begin{comment}
\begin{align}
Q(y,v,\xi) = \min_{x,\ell}\quad & \sum_{(i,j)\in E}c_{ij}x_{ij} + \sum_{(i,j)\in E^{I}}
\left(\underline{\tau}_{ij} + \Delta\tau_{ij}q_{ij} \right)V_{ij}x_{ij} + && \sum_{r\in R}\pi_r\ell_r \label{eq:recourse_obj}
\\
&
\sum_{p:(s,p)\in E^{SP}}x_{sp} \leq U_s y_s(1-a_s), && s\in S, \label{eq:supplier_capacity}
\\
&
\sum_{s:(s,p)\in E^{SP}}x_{sp}
=
\sum_{w:(p,w)\in E^{PW}}x_{pw},
&& p\in P,
\label{eq:plant_balance}\\
&
\sum_{w:(p,w)\in E^{PW}}x_{pw}
\leq
U_p(1-b_p),
&& p\in P,
\label{eq:plant_capacity}\\
&
\sum_{p:(p,w)\in E^{PW}}x_{pw}
=
\sum_{r:(w,r)\in E^{WR}}x_{wr},
&& w\in W,
\label{eq:warehouse_balance}\\
&
\sum_{r:(w,r)\in E^{WR}}x_{wr}
\leq
U_w(1-d_w),
&& w\in W,
\label{eq:warehouse_capacity}\\
&
\sum_{w:(w,r)\in E^{WR}}x_{wr}
+
\ell_r
=
D_r,
&& r\in R,
\label{eq:demand_balance}\\
&
x_{ij}
\leq
u_{ij}v_{ij}(1-z_{ij}),
&& (i,j)\in E,
\label{eq:physical_capacity}\\
&
x_{ij}
\leq
u_{ij}v_{ij}
\left(1-\delta_{ij}h_{ij}\right),
&& (i,j)\in E^{I},
\label{eq:trade_capacity}\\
&
x_{ij}\geq0,
&& (i,j)\in E,
\label{eq:flow_nonnegative}\\
&
\ell_r \geq 0,
&& r\in R.
\label{eq:shortage_bounds}
\end{align}
\end{comment}
%
%
%
%
%
%
%
%
%
%
%
The objective function in~\eqref{eq:recourse_obj} consists of three cost
components. The first term,
$\sum_{(i,j)\in E}c_{ij}x_{ij}$, is the operating cost associated with
shipments over the transportation network, where $c_{ij}$ is the unit
operating cost on arc $(i,j)$. The second term, $\sum_{(i,j)\in E^{I}}
\left(\underline{\tau}_{ij}+\Delta\tau_{ij}q_{ij}\right)
V_{ij}x_{ij}$, is the tariff cost incurred on international shipments. For each
$(i,j)\in E^{I}$, the realized tariff rate is
$\underline{\tau}_{ij}+\Delta\tau_{ij}q_{ij}$, which, by
Equation~\eqref{eq:tariff_equation}, equals the baseline rate
$\underline{\tau}_{ij}$ when $q_{ij}=0$ and the elevated rate
$\overline{\tau}_{ij}$ when $q_{ij}=1$. Thus, $\left(\underline{\tau}_{ij}+\Delta\tau_{ij}q_{ij}\right)V_{ij}$ represents the tariff payment per unit shipped on international arc $(i,j)$, and multiplication by $x_{ij}$ yields the total tariff payment associated
with shipments on that arc. The third term, $\sum_{r\in R}\pi_r\ell_r$, is the shortage-penalty cost, where $\pi_r$ is the unit penalty applied to unmet demand $\ell_r$ at retailer $r$. Accordingly, $Q(y,v,\xi)$ minimizes the total second-stage cost of operating
the network under realization $\xi$.

Constraint~\eqref{eq:supplier_capacity} limits the total outbound shipment
from supplier $s$ to its realized available capacity. The product
$y_s(1-a_s)$ captures the joint effect of the first-stage qualification
decision and the realized disruption state: positive supply is possible only
when supplier $s$ has been qualified ($y_s=1$) and remains operational
($a_s=0$). 
Constraint~\eqref{eq:plant_balance} enforces flow conservation at each
manufacturing plant by equating total inbound shipments from suppliers with
total outbound shipments to warehouses, while
Constraint~\eqref{eq:plant_capacity} limits the latter by the plant's
realized processing capacity $U_p(1-b_p)$. Similarly,
Constraint~\eqref{eq:warehouse_balance} equates total inbound shipments to
each warehouse with its outbound shipments to retailers, and
Constraint~\eqref{eq:warehouse_capacity} limits total outbound flow by the
realized warehouse capacity $U_w(1-d_w)$. %These balance equations assume unit conversion of flow across successive echelons.
Finally, Constraint~\eqref{eq:demand_balance} requires the demand $D_r$ at
each retailer $r$ to be accounted for by incoming shipments and unmet demand
$\ell_r$. Thus, any demand that cannot be served through the realized network
is explicitly represented by $\ell_r$ and incurs the penalty specified in
the objective function~\eqref{eq:recourse_obj}.

Constraint~\eqref{eq:physical_capacity} restricts flow on arc $(i,j)$ to its
realized physical capacity, conditional on the first-stage activation
decision. Specifically, $v_{ij}=0$ prevents flow on an arc that was not
activated, while $z_{ij}=1$ reduces the available capacity of an activated
arc to zero. For each international arc $(i,j)\in E^{I}$,
Constraint~\eqref{eq:trade_capacity} imposes an additional capacity bound
associated with policy-induced trade restrictions. When $h_{ij}=0$, this
bound equals the nominal activated capacity $u_{ij}v_{ij}$; when $h_{ij}=1$,
the usable cross-border capacity is reduced to
$(1-\delta_{ij})u_{ij}v_{ij}$. Consequently, an international arc must
simultaneously satisfy the physical-capacity and trade-capacity constraints.
A physical disruption ($z_{ij}=1$) renders the arc unavailable regardless of
its trade-restriction status, whereas a policy-induced restriction
($h_{ij}=1$) reduces capacity by the fraction $\delta_{ij}$ when the arc
remains physically available. This formulation preserves the distinction
between the physical-transportation and trade-capacity channels defined in
Section~\ref{sec:uncertainty}.
Finally, Constraints~\eqref{eq:flow_nonnegative} and
\eqref{eq:shortage_bounds} specify the domains of the recourse variables.
Because unmet demand is permitted up to $D_r$ at every retailer,
the recourse problem is feasible for every first-stage configuration and every
$\xi\in\mathcal{U}^{F}$. %Hence, the model has relatively complete recourse, a property used in the reformulation developed in Section~\ref{sec:reformulation}.

%%%%%%%%%%%%%%%%%%%%%%%%%%%%%%%%%%%%%%%%%%%%%%%%%%%%%%%%%%%%%%%%%%%%%%%%%%%%%%%%
\subsection{Two-stage robust formulation \& nested uncertainty models}
\label{sec:robust}
%%%%%%%%%%%%%%%%%%%%%%%%%%%%%%%%%%%%%%%%%%%%%%%%%%%%%%%%%%%%%%%%%%%%%%%%%%%%%%%%

Combining the first-stage reconfiguration cost in~\eqref{eq:config_cost}, the multi-channel uncertainty set
$\mathcal{U}^{F}$ in~\eqref{eq:uncertainty_set}, and the operational recourse
function $Q(y,v,\xi)$ in~\eqref{eq:recourse_obj}--\eqref{eq:shortage_bounds}
yields the two-stage robust optimization problem:
\begin{equation}
Z^{F} = \min_{(y,v)\in\mathcal{Y}}\left\{C^{R}(y,v) + \max_{\xi\in\mathcal{U}^{F}}
Q(y,v,\xi) \right\}, \label{eq:robust_model}
\end{equation}
where $\mathcal{Y}$ denotes the feasible domain of the first-stage strategic
decisions. Equivalently, using the definition of the recourse function,
Equation~\eqref{eq:robust_model} has the following min--max--min representation:
\begin{equation}
Z^{F} = \min_{(y,v)\in\mathcal{Y}} \left\{C^{R}(y,v) + \max_{\xi\in\mathcal{U}^{F}} \;\min_{(x,\ell)\in\mathcal{F}(y,v,\xi)} C^{O}(x,\ell;\xi) \right\},\label{eq:minmaxmin}
\end{equation}
where $\mathcal{F}(y,v,\xi)$ denotes the feasible region defined by
Constraints~\eqref{eq:supplier_capacity}--\eqref{eq:shortage_bounds}, and $C^{O}(x,\ell;\xi)$ denotes the second-stage objective function defined
in~\eqref{eq:operating_cost} and used in~\eqref{eq:recourse_obj}. The outer minimization selects the strategic
network configuration before uncertainty is observed; the maximization
identifies the most adverse realization in $\mathcal{U}^{F}$ for that
configuration; and the inner minimization determines the optimal operational
response after the realization is observed.
An important structural property of the proposed formulation is that
restricted uncertainty environments can be obtained directly from the full
multi-channel uncertainty set $\mathcal{U}^{F}$ by suppressing selected
consequence channels, while preserving the underlying supply chain network,
first-stage decision structure, and operational recourse model. The resulting
formulations therefore differ only in the channels through which
country-level geopolitical shocks are permitted to affect the network. This
provides a common modeling basis for isolating the strategic implications of
individual uncertainty channels and, subsequently, for assessing the
consequences of omitting selected channels from the robust design problem.
We consider three such restrictions: tariff-only, trade-policy, and
physical-disruption uncertainty.
Let $\mathcal{U}^{T}$ denote the tariff-only uncertainty set obtained from
$\mathcal{U}^{F}$ by imposing:
\begin{equation}
\mathcal{U}^{T} = \left\{\xi\in\mathcal{U}^{F}:a=b=d=z=h=0\right\}.
\label{eq:tariff_only_set}
\end{equation}
Under $\mathcal{U}^{T}$, country-level shocks may propagate to the network
only through tariff escalation $q$; facility disruptions, physical
transportation failures, and policy-induced cross-border capacity
restrictions are excluded. 
The trade-policy uncertainty set is defined as:
\begin{equation}
\mathcal{U}^{TP} = \left\{\xi\in\mathcal{U}^{F}:a=b=d=z=0\right\},
\label{eq:trade_policy_set}
\end{equation}
which retains both trade-policy channels, $q$ and $h$. Thus, geopolitical
shocks may alter the tariff cost and usable cross-border capacity of
international arcs, while facility availability and physical transportation
connectivity remain unaffected.
Finally, the physical-disruption uncertainty set is:
\begin{equation}
\mathcal{U}^{D} = \left\{\xi\in\mathcal{U}^{F}:q=h=0 \right\},\label{eq:disruption_only_set}
\end{equation}
under which geopolitical shocks may propagate through facility disruptions
$(a,b,d)$ and physical transportation failures $z$, while the trade-policy
channels are suppressed. In each restricted uncertainty set, the applicable
country-coupling and uncertainty-budget constraints are inherited from
$\mathcal{U}^{F}$. These restrictions induce the following nesting
relationship among the uncertainty sets.
\begin{proposition}
\label{prop:nesting}
The tariff-only, trade-policy, and physical-disruption uncertainty sets defined
in~\eqref{eq:tariff_only_set}--\eqref{eq:disruption_only_set} satisfy
\begin{equation}
\mathcal{U}^{T} \subseteq \mathcal{U}^{TP} \subseteq \mathcal{U}^{F}, \qquad \mathcal{U}^{D}
\subseteq \mathcal{U}^{F}.
\label{eq:set_nesting}
\end{equation}
\end{proposition}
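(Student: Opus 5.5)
The plan is to argue directly from the set-builder definitions \eqref{eq:tariff_only_set}--\eqref{eq:disruption_only_set}. Each of $\mathcal{U}^{T}$, $\mathcal{U}^{TP}$ and $\mathcal{U}^{D}$ is written as $\{\xi\in\mathcal{U}^{F}:\ \text{additional equalities}\}$. So each one is the intersection of $\mathcal{U}^{F}$ with further restrictions, and inclusions in $\mathcal{U}^{F}$ follow by elementwise containment. I will prove the three inclusions separately and in increasing order of substance.

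First, $\mathcal{U}^{TP}\subseteq\mathcal{U}^{F}$ and $\mathcal{U}^{D}\subseteq\mathcal{U}^{F}$ are immediate. Take any $\xi\in\mathcal{U}^{TP}$ (respectively $\mathcal{U}^{D}$). By definition $\xi\in\mathcal{U}^{F}$ and additionally $a=b=d=z=0$ (respectively $q=h=0$). Dropping the additional equalities leaves $\xi\in\mathcal{U}^{F}$.

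Second, for $\mathcal{U}^{T}\subseteq\mathcal{U}^{TP}$, take $\xi\in\mathcal{U}^{T}$. Then $\xi\in\mathcal{U}^{F}$ and $a=b=d=z=h=0$. In particular $a=b=d=z=0$ holds, since the restriction set defining $\mathcal{U}^{TP}$ is a subset of the restriction set defining $\mathcal{U}^{T}$. Hence $\xi$ satisfies the defining conditions of $\mathcal{U}^{TP}$. Chaining this with the first step gives $\mathcal{U}^{T}\subseteq\mathcal{U}^{TP}\subseteq\mathcal{U}^{F}$. Transitivity also yields $\mathcal{U}^{T}\subseteq\mathcal{U}^{F}$.

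The only point that needs care, and the one I would flag as the potential obstacle, is the paper's remark that in each restricted set ``the applicable country-coupling and uncertainty-budget constraints are inherited from $\mathcal{U}^{F}$.'' One might worry that the restricted sets are defined with their own constraint systems and not literally as subsets of $\mathcal{U}^{F}$. I would resolve this by noting two facts. The definitions explicitly take $\xi\in\mathcal{U}^{F}$. Moreover, setting channel variables to zero never violates any constraint of $\mathcal{U}^{F}$: the coupling constraints \eqref{eq:country_facility}--\eqref{eq:country_tradecapacity} have nonnegative right-hand sides, and the budgets \eqref{eq:facility_budgets}--\eqref{eq:joint_budget} are upper bounds on nonnegative sums, assuming $\omega_F,\omega_T,\omega_H\ge 0$. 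Therefore the restricted sets coincide with $\mathcal{U}^{F}$ intersected with the coordinate subspaces, and no constraint is lost or altered. As a remark, I would also note that the sets are nonempty, since $\xi=0$ lies in all of them, although the statement does not require this.
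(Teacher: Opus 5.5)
Your proof is correct and follows the paper's argument: $\mathcal{U}^{TP}$ and $\mathcal{U}^{D}$ are $\mathcal{U}^{F}$ with extra zero restrictions, and $\mathcal{U}^{T}$ is $\mathcal{U}^{TP}$ with the further restriction $h=0$, so all three inclusions follow directly from the definitions. Your closing remark that zeroing channel variables preserves feasibility is correct but not needed, since the definitions already require $\xi\in\mathcal{U}^{F}$.
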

\begin{proof}
The set $\mathcal{U}^{TP}$ is obtained from $\mathcal{U}^{F}$ by imposing
$a=b=d=z=0$, and $\mathcal{U}^{T}$ is obtained from
$\mathcal{U}^{TP}$ by additionally imposing $h=0$. Hence,
$\mathcal{U}^{T}\subseteq\mathcal{U}^{TP}\subseteq\mathcal{U}^{F}$.
Similarly, imposing $q=h=0$ on $\mathcal{U}^{F}$ yields
$\mathcal{U}^{D}$, and therefore
$\mathcal{U}^{D}\subseteq\mathcal{U}^{F}$.
\end{proof}

Proposition~\ref{prop:nesting} establishes that the restricted uncertainty
environments are embedded within the full multi-channel uncertainty
architecture while preserving the underlying network and decision structure.
This nesting permits strategic configurations obtained under different
uncertainty environments to be compared within a common modeling framework
and subsequently evaluated under the full multi-channel uncertainty set.
For any fixed strategic configuration $(y,v)$,
Proposition~\ref{prop:nesting} implies $\max_{\xi\in\mathcal{U}^{T}}Q(y,v,\xi)
\leq \max_{\xi\in\mathcal{U}^{TP}}Q(y,v,\xi)
\leq \max_{\xi\in\mathcal{U}^{F}}Q(y,v,\xi)$, and $\max_{\xi\in\mathcal{U}^{D}}Q(y,v,\xi) \leq \max_{\xi\in\mathcal{U}^{F}}Q(y,v,\xi)$.
Because the feasible set of strategic decisions and the reconfiguration
cost remain unchanged across uncertainty environments, these
relationships also imply $Z^{T}\leq Z^{TP}\leq Z^{F}$, and $Z^{D}\leq Z^{F}$,
where $Z^{T}$, $Z^{TP}$, and $Z^{D}$ denote the optimal robust
objective values under their respective uncertainty sets.
These inequalities follow directly from the nesting relationships.
However, the ordering of the optimal robust objective values does not
imply that the corresponding strategic configurations are nested or
that their reconfiguration costs increase as the uncertainty set
expands.
%
%
%
%
%
%
%For any fixed strategic configuration $(y,v)$, Proposition~\ref{prop:nesting} further implies: $\max_{\xi\in\mathcal{U}^{T}}Q(y,v,\xi) \leq \max_{\xi\in\mathcal{U}^{TP}}Q(y,v,\xi) \leq \max_{\xi\in\mathcal{U}^{F}}Q(y,v,\xi)$, and $\max_{\xi\in\mathcal{U}^{D}}Q(y,v,\xi) \leq \max_{\xi\in\mathcal{U}^{F}}Q(y,v,\xi)$. These inequalities follow directly from the set inclusions in~\eqref{eq:set_nesting}: for fixed $(y,v)$, maximizing the same recourse function over a larger uncertainty set cannot decrease its worst-case value. This ordering applies to the worst-case recourse cost of a fixed strategic configuration and does not imply an ordering of the optimal first-stage configurations or their associated reconfiguration costs, since $(y,v)$ is reoptimized under each uncertainty environment.
%
%
%
%
%
Equation~\eqref{eq:minmaxmin} captures the sequential interaction between the firm and nature. %The outer minimization determines the strategic network configuration before uncertainty is realized. Nature subsequently selects the most adverse country-aware geopolitical scenario contained within the uncertainty set. Finally, after observing the realized uncertainty, the firm adaptively determines sourcing, production, transportation, and distribution decisions that minimize operational cost over the surviving network.
The resulting formulation constitutes a mixed-integer two-stage robust optimization problem with adaptive recourse. Owing to the embedded max--min structure, directly solving \eqref{eq:minmaxmin} is computationally challenging. The following section uses strong duality to eliminate the inner recourse minimization for fixed first-stage decisions and develops the adversarial subproblem used within a column-and-constraint generation procedure. %The resulting adversarial formulation is a nonconvex mixed-integer bilinear optimization problem.

%%%%%%%%%%%%%%%%%%%%%%%%%%%%%%%%%%%%%%%%%%%%%%%%%%%%%%%%%%%%%%%%%%%%%%%%%%%%%%%%
\section{Reformulation and solution method}
\label{sec:reformulation}
%%%%%%%%%%%%%%%%%%%%%%%%%%%%%%%%%%%%%%%%%%%%%%%%%%%%%%%%%%%%%%%%%%%%%%%%%%%%%%%%

The robust formulation~\eqref{eq:robust_model} requires evaluation of the
worst-case recourse cost over $\mathcal{U}^{F}$ for a given first-stage
configuration. Explicit incorporation of all admissible uncertainty
realizations becomes computationally impractical as the dimensions of the
network and uncertainty set increase. We therefore employ the column and constraint generation (CCG) method (see for instance \cite{ZengZhao2013}), which iteratively augments a restricted master problem (RMP) with uncertainty
realizations identified by an adversarial subproblem. For fixed first-stage decisions, the adversarial subproblem is obtained by dualizing the linear operational recourse problem defined in Section~\ref{sec:recourse}.

%%%%%%%%%%%%%%%%%%%%%%%%%%%%%%%%%%%%%%%%%%%%%%%%%%%%%%%%%%%%%%%%%%%%%%%%%%%%%%%%
\subsection{Recourse dualization and adversarial problem}
\label{sec:dualization}
%%%%%%%%%%%%%%%%%%%%%%%%%%%%%%%%%%%%%%%%%%%%%%%%%%%%%%%%%%%%%%%%%%%%%%%%%%%%%%%%

For a fixed first-stage configuration $(\bar y,\bar v)\in\mathcal{Y}$, define
the corresponding worst-case recourse value as
\begin{equation}
\mathcal{Q}(\bar y,\bar v)
=
\max_{\xi\in\mathcal{U}^{F}}
Q(\bar y,\bar v,\xi).
\label{eq:worst_case_recourse}
\end{equation}
For any fixed $(\bar y,\bar v)\in\mathcal{Y}$ and
$\xi\in\mathcal{U}^{F}$, the first-stage decisions and all components of the
uncertainty realization are fixed parameters in the recourse problem defined
in Section~\ref{sec:recourse}. Consequently,
$Q(\bar y,\bar v,\xi)$ is a linear program in the shipment variables
$x_{ij}$, $(i,j)\in E$, and unmet-demand variables $\ell_r$, $r\in R$.
For every $(\bar y,\bar v)\in\mathcal{Y}$ and
$\xi\in\mathcal{U}^{F}$, the recourse problem is feasible. To establish this, consider the recourse solution: $x_{ij}=0,\ \forall (i,j)\in E$ and $\ell_r=D_r,\ \forall r\in R$,
%
%
%
%With $x_{ij}=0$ for all $(i,j)\in E$, the flow-balance Constraints~\eqref{eq:plant_balance} and~\eqref{eq:warehouse_balance} are satisfied. The supplier-, plant-, and warehouse-capacity Constraints~\eqref{eq:supplier_capacity}, \eqref{eq:plant_capacity}, and \eqref{eq:warehouse_capacity} are also satisfied because their left-hand sides are zero and their right-hand sides are nonnegative for every $(\bar y,\bar v)\in\mathcal{Y}$ and $\xi\in\mathcal{U}^{F}$.
%
%
%For every transportation arc $(i,j)\in E$, Constraint~\eqref{eq:physical_capacity} is satisfied by
%\eqref{eq:recourse_feasible_solution} because $0
%\leq u_{ij}\bar v_{ij}(1-z_{ij})$, where $u_{ij}\geq0$, $\bar v_{ij}\in\{0,1\}$, and $z_{ij}\in\{0,1\}$. For every international arc $(i,j)\in E^{I}$, Constraint~\eqref{eq:trade_capacity} is simultaneously satisfied because $0
%\leq u_{ij}\bar v_{ij} \left(1-\delta_{ij}h_{ij}\right)$, where $\delta_{ij}\in[0,1]$ and $h_{ij}\in\{0,1\}$ imply $1-\delta_{ij}h_{ij}\geq0$. Finally, substituting $x_{wr}=0$ and $\ell_r=D_r$ into
%Constraint~\eqref{eq:demand_balance} satisfies every retailer demand-balance constraint. 
for which $Q(\bar y,\bar v,\xi)$ is feasible for every
$(\bar y,\bar v)\in\mathcal{Y}$ and every $\xi\in\mathcal{U}^{F}$.
The recourse feasible region is also bounded. For every $(i,j)\in E$,
Constraint~\eqref{eq:physical_capacity} gives $0 \leq x_{ij} \leq u_{ij}\bar v_{ij}(1-z_{ij})$, and the right-hand side is finite. For an international arc $(i,j)\in E^{I}$, Constraint~\eqref{eq:trade_capacity} additionally imposes $x_{ij} \leq u_{ij}\bar v_{ij} \left(1-\delta_{ij}h_{ij}\right)$.
Furthermore, Constraint~\eqref{eq:demand_balance}, together with
$x_{wr}\geq0$ and $\ell_r\geq0$, implies $0\leq\ell_r\leq D_r$, for every $r\in R$.
Thus, all recourse variables are bounded over the feasible region. Since the coefficients of the recourse objective~\eqref{eq:recourse_obj} are finite, $Q(\bar y,\bar v,\xi)$ has a finite optimal value for every
$(\bar y,\bar v)\in\mathcal{Y}$ and $\xi\in\mathcal{U}^{F}$. Strong duality therefore applies to the recourse problem for every such pair.

To construct the recourse dual, we associate nonnegative dual variables
$\{\phi_s\}_{s\in S}$, $\{\gamma_p\}_{p\in P}$, and $\{\theta_w\}_{w\in W}$, with Constraints~\eqref{eq:supplier_capacity}, \eqref{eq:plant_capacity}, and \eqref{eq:warehouse_capacity}, respectively. 
Let $\{\alpha_{ij}\}_{(i,j)\in E} \geq0$, be the dual variables associated with the Constraints in~\eqref{eq:physical_capacity}, and let $\{\beta_{ij}\}_{(i,j)\in E^{I}}\geq0$, be the dual variable associated with the Constraints in~\eqref{eq:trade_capacity}. The equality
Constraints in~\eqref{eq:plant_balance}, \eqref{eq:warehouse_balance}, and
\eqref{eq:demand_balance} are assigned dual variables
$\{\rho_p\}_{p\in P}$, $\{\psi_w\}_{w\in W}$, and $\{\lambda_r\}_{r\in R}$,
respectively, each unrestricted in sign.

For each fixed $(\bar y,\bar v)\in\mathcal{Y}$ and
$\xi\in\mathcal{U}^{F}$, strong duality permits $Q(\bar y,\bar v,\xi)$ in~\eqref{eq:worst_case_recourse} to be replaced by
the optimal value of its dual. The dual objective is obtained from the right-hand sides of the capacity and demand-balance constraints \eqref{eq:supplier_capacity},
\eqref{eq:plant_capacity}, \eqref{eq:warehouse_capacity},
\eqref{eq:demand_balance}, \eqref{eq:physical_capacity}, and
\eqref{eq:trade_capacity}. Maximizing this dual representation jointly over
the admissible uncertainty realization and the recourse-dual variables gives:
\begin{align}
\mathcal{Q}(\bar y,\bar v) = \max_{\substack{
\xi\in\mathcal{U}^{F},
\\
\phi,\gamma,\theta,\alpha,\beta,\rho,\psi,\lambda}}
\quad
&
\sum_{r\in R}D_r\lambda_r - \sum_{s\in S}U_s\bar y_s(1-a_s)\phi_s
- \sum_{p\in P}U_p(1-b_p)\gamma_p
\nonumber
\\
& - \sum_{w\in W}U_w(1-d_w)\theta_w - \sum_{(i,j)\in E} u_{ij}\bar v_{ij}(1-z_{ij})\alpha_{ij}
\nonumber
\\
& - \sum_{(i,j)\in E^{I}} u_{ij}\bar v_{ij} \left(1-\delta_{ij}h_{ij}\right)\beta_{ij}.
\label{eq:adversarial_objective}
\end{align}
%
%where the first term is induced by the retailer demand-balance constraints, and the remaining terms correspond, respectively, to the realized supplier, plant, warehouse, physical arc, and policy-restricted cross-border capacities. The first-stage decisions $(\bar y,\bar v)$ are fixed in this problem, whereas $(a,b,d,z,h)$ are components of the uncertainty realization selected through $\mathcal{U}^{F}$.
%
%
%
The dual-feasibility constraints are obtained by collecting the coefficients of each nonnegative shipment variable in the recourse Lagrangian. Because the trade-policy terms in the recourse objective apply only to international arcs, the resulting constraints are distinguished according to the arc partition $(E^{D},E^{I})$ defined  in~\eqref{eq:international_domestic_link}. For notational compactness, we define the realized unit cost on arc $(i,j)\in E$ as:
\begin{equation}
\widetilde{c}_{ij}(\xi)=
\begin{cases}
c_{ij}, & (i,j)\in E^{D},\\[1mm]
c_{ij}+\bigl(\underline{\tau}_{ij}
+\Delta\tau_{ij}q_{ij}\bigr)V_{ij},
& (i,j)\in E^{I}.
\end{cases}
\label{eq:realized_arc_cost}
\end{equation}
For notational convenience, extend $\beta$ to all $(i,j)\in E$ by defining
$\beta_{ij}=0$ for every domestic arc $(i,j)\in E^{D}$.
Using~\eqref{eq:realized_arc_cost} and the convention
$\beta_{ij}=0$ for $(i,j)\in E^{D}$, the dual-feasibility conditions for
the three transportation echelons can be written compactly as \eqref{eq:dual_sp}, \eqref{eq:dual_pw}, and \eqref{eq:dual_wr}:
\begin{align}
-\phi_s+\rho_p-\alpha_{sp}-\beta_{sp}
&\leq \widetilde{c}_{sp}(\xi),
&& (s,p)\in E^{SP},
\label{eq:dual_sp}\\
-\gamma_p-\rho_p+\psi_w-\alpha_{pw}-\beta_{pw}
&\leq \widetilde{c}_{pw}(\xi),
&& (p,w)\in E^{PW},
\label{eq:dual_pw}\\
-\theta_w-\psi_w+\lambda_r-\alpha_{wr}-\beta_{wr}
&\leq \widetilde{c}_{wr}(\xi),
&& (w,r)\in E^{WR},
\label{eq:dual_wr}
\end{align}
Constraints~\eqref{eq:dual_sp}--\eqref{eq:dual_wr} correspond to the
shipment variables on the supplier--plant, plant--warehouse, and
warehouse--retailer echelons, respectively. For a domestic arc,
$\beta_{ij}=0$ and $\widetilde{c}_{ij}(\xi)=c_{ij}$. For an international
arc, the corresponding dual constraint additionally contains the
cross-border capacity multiplier $\beta_{ij}$, while
$\widetilde{c}_{ij}(\xi)$ includes the realized tariff component defined
in~\eqref{eq:realized_arc_cost}.
Finally, the coefficient of $\ell_r$ in the recourse Lagrangian is
$\pi_r-\lambda_r$. Since $\ell_r\geq0$, finiteness of the Lagrangian
infimum requires
\begin{equation}
\lambda_r\leq\pi_r,
\qquad r\in R.
\label{eq:dual_shortage}
\end{equation}
Thus, the adversarial problem (AP) associated with the fixed first-stage configuration $(\bar y,\bar v)$ can be written compactly as
\begin{equation}
\label{eq:adversarial_problem}
\mathcal{Q}(\bar y,\bar v)
=
\max_{\substack{
\xi,\,
\phi,\gamma,\theta,\alpha,\beta,\rho,\psi,\lambda}}
\left\{
\mathbb{F}
(\xi,\phi,\gamma,\theta,\alpha,\beta,\rho,\psi,\lambda)
\;\middle|\;
\begin{array}{l}
\xi\in\mathcal{U}^{F}, \eqref{eq:dual_sp}-\eqref{eq:dual_shortage}\\
\phi,\gamma,\theta,\alpha \geq 0\\
\{\beta_{ij}\}_{(i,j)\in E^{I}} \geq0\\
\{\beta_{ij}\}_{(i,j)\in E^{D}} = 0
\end{array}
\right\},
\end{equation}
where $\mathbb{F}(\cdot)$ denotes the objective function in
\eqref{eq:adversarial_objective}. The adversarial problem~\eqref{eq:adversarial_problem} is a mixed-integer
bilinear optimization problem because its objective contains products between
binary uncertainty variables and continuous dual variables, including
$a_s\phi_s$, $b_p\gamma_p$, $d_w\theta_w$, $z_{ij}\alpha_{ij}$, and
$h_{ij}\beta_{ij}$. %An exact linear reformulation of these products through big-$M$ or McCormick constraints would require valid finite bounds on the corresponding dual variables. Since such bounds are not imposed in the proposed formulation, 
We retain the bilinear structure and solve the
adversarial problem directly using a global nonconvex optimization method.

%%%%%%%%%%%%%%%%%%%%%%%%%%%%%%%%%%%%%%%%%%%%%%%%%%%%%%%%%%%%%%%%%%%%%%%%%%%%%%%%
\subsection{Column-and-constraint generation}
\label{sec:algorithm}
%%%%%%%%%%%%%%%%%%%%%%%%%%%%%%%%%%%%%%%%%%%%%%%%%%%%%%%%%%%%%%%%%%%%%%%%%%%%%%%%

The adversarial problem~\eqref{eq:adversarial_problem} evaluates the
worst-case recourse cost associated with a fixed first-stage configuration.
To solve the two-stage robust problem~\eqref{eq:robust_model} without
explicitly incorporating all realizations in $\mathcal{U}^{F}$, we employ a
column-and-constraint generation (CCG) procedure. The method iterates between
a restricted master problem (RMP), which optimizes the first-stage decisions over a
finite set of previously generated uncertainty realizations, and the
adversarial problem, which identifies a worst-case realization for the
resulting first-stage configuration. 
A CCG iteration consists of solving the RMP for the current set of generated
uncertainty realizations, followed by solving the adversarial problem for the
resulting first-stage configuration. We index the iterations by
$k=1,2,\ldots$. 
At each nonterminating iteration, the worst-case realization identified by
the adversarial problem is incorporated into the RMP, together with a
scenario-specific copy of the recourse variables and the corresponding
recourse constraints.

\paragraph{Restricted master problem.}
Let
\begin{equation}
\widehat{\mathcal{U}}^{\,k}
=
\{\xi^{1},\ldots,\xi^{k}\}
\subseteq
\mathcal{U}^{F}
\label{eq:generated_scenario_set}
\end{equation}
denote the set of uncertainty realizations included in the restricted master
problem at iteration $k$. For each
$\xi^{n}\in\widehat{\mathcal{U}}^{\,k}$, let
$(x^{n},\ell^{n})$ denote a scenario-specific copy of the recourse variables,
and let $\eta$ be an epigraph variable for the largest recourse cost among
the generated realizations. The restricted master problem is then obtained as follows:
\begin{align}
Z_{\mathrm{RMP}}^{k}
=
\min_{y,v,\eta,\{x^{n},\ell^{n}\}_{n=1}^{k}}
\quad&
C^{R}(y,v)+\eta
\label{eq:rmp_obj}\\
\text{s.t.}\quad
&
(y,v)\in\mathcal{Y},
\label{eq:rmp_firststage}\\
&
(x^{n},\ell^{n})
\in
\mathcal{F}(y,v,\xi^{n}),
&& n=1,\ldots,k,
\label{eq:rmp_recourse}\\
&
\eta
\geq
C^{O}(x^{n},\ell^{n};\xi^{n}),
&& n=1,\ldots,k.
\label{eq:rmp_eta}
\end{align}
Here, $(y,v)\in \mathcal{Y}$ in \eqref{eq:rmp_firststage} denotes the feasible domain of the first-stage strategic decisions as defined in \eqref{eq:first_stage_domain}; $\mathcal{F}(y,v,\xi^{n})$ denotes the operational feasible region defined by Constraints~\eqref{eq:supplier_capacity}--
\eqref{eq:shortage_bounds}, evaluated at the fixed realization $\xi^{n}$,
and $C^{O}(\cdot)$ is the second-stage operational cost defined in
\eqref{eq:operating_cost}. Constraint~\eqref{eq:rmp_recourse} enforces a
feasible recourse response to each realization in
$\widehat{\mathcal{U}}^{\,k}$, while Constraint~\eqref{eq:rmp_eta} requires
$\eta$ to dominate the corresponding recourse costs. Since $\eta$ is
minimized through~\eqref{eq:rmp_obj}, its optimal value equals the largest
recourse cost among the realizations currently included in the restricted
master problem.
Because $\widehat{\mathcal{U}}^{\,k}\subseteq\mathcal{U}^{F}$, the restricted master
problem optimizes against a subset of the uncertainty realizations considered
in~\eqref{eq:robust_model}. Consequently,
\begin{equation}
LB_k
=
Z_{\mathrm{RMP}}^{k}
\leq
Z^{F},
\label{eq:ccg_lower_bound}
\end{equation}
so $Z_{\mathrm{RMP}}^{k}$ provides a valid lower bound on the optimal value
of the full robust problem. Let $(\bar y^{k},\bar v^{k})$ denote an optimal first-stage configuration obtained from the restricted master problem at iteration $k$.

\paragraph{Adversarial subproblem, scenario generation, and bound update.}

Let $\bar\eta^{\,k}$ denote the optimal value of the epigraph variable
$\eta$ obtained from the RMP at iteration $k$. By
Constraints~\eqref{eq:rmp_recourse}--\eqref{eq:rmp_eta},
$\bar\eta^{\,k}$ represents the largest optimal recourse cost over the
realizations currently contained in
$\widehat{\mathcal{U}}^{\,k}$; equivalently,
$\bar\eta^{\,k}
=
\max_{\xi\in\widehat{\mathcal{U}}^{\,k}}
Q(\bar y^{k},\bar v^{k},\xi)$.
The restricted master problem, however, does not account for realizations in
$\mathcal{U}^{F}\setminus\widehat{\mathcal{U}}^{\,k}$. The candidate
configuration $(\bar y^{k},\bar v^{k})$ is therefore evaluated against the
complete uncertainty set by solving the adversarial
problem~\eqref{eq:adversarial_problem}. Using the worst-case recourse
function defined in~\eqref{eq:worst_case_recourse}, this yields
$\mathcal{Q}(\bar y^{k},\bar v^{k})
=
\max_{\xi\in\mathcal{U}^{F}}
Q(\bar y^{k},\bar v^{k},\xi)$.
Let $\bar\xi^{\,k}\in\mathcal{U}^{F}$ denote a realization attaining this
maximum, so that
$\mathcal{Q}(\bar y^{k},\bar v^{k})
=
Q(\bar y^{k},\bar v^{k},\bar\xi^{\,k})$, where the value on the right-hand
side is obtained through the dual adversarial formulation developed in
Section~\ref{sec:dualization}.
The full robust objective evaluated at the current first-stage configuration
is therefore
\begin{equation}
\widehat Z_k
=
C^{R}(\bar y^{k},\bar v^{k})
+
\mathcal{Q}(\bar y^{k},\bar v^{k}).
\label{eq:ccg_candidate_value}
\end{equation}
By~\eqref{eq:robust_model}, $Z^{F}$ is the minimum of this robust objective
over all feasible first-stage configurations in $\mathcal{Y}$. Since
$(\bar y^{k},\bar v^{k})\in\mathcal{Y}$ is one such feasible
configuration, $Z^{F}\leq\widehat Z_k$. Hence, $\widehat Z_k$ is a valid
upper bound on the optimal robust objective. Maintaining the best such value
obtained through iteration $k$ gives the incumbent upper bound
\begin{equation}
UB_k
=
\min\left\{
UB_{k-1},
\widehat Z_k
\right\},
\qquad
UB_0=+\infty.
\label{eq:ccg_upper_bound}
\end{equation}
Together with the restricted-master lower bound
$LB_k=Z_{\mathrm{RMP}}^{k}$ established in
\eqref{eq:ccg_lower_bound}, the CCG bounds satisfy
$LB_k\leq Z^{F}\leq UB_k$.
The comparison between the adversarial value and the restricted-master
epigraph value determines whether the current RMP omits an uncertainty
realization that increases the worst-case recourse cost of its current
first-stage configuration. From the preceding definitions,
\begin{equation}
\bar\eta^{\,k}
=
\max_{\xi\in\widehat{\mathcal{U}}^{\,k}}
Q(\bar y^{k},\bar v^{k},\xi)
\leq
\max_{\xi\in\mathcal{U}^{F}}
Q(\bar y^{k},\bar v^{k},\xi)
=
\mathcal{Q}(\bar y^{k},\bar v^{k}),
\label{eq:rmp_adversarial_relation}
\end{equation}
where the inequality follows from
$\widehat{\mathcal{U}}^{\,k}\subseteq\mathcal{U}^{F}$.
If the inequality in~\eqref{eq:rmp_adversarial_relation} is strict, the
restricted master problem underestimates the worst-case recourse cost of
$(\bar y^{k},\bar v^{k})$. The realization $\bar\xi^{\,k}$ responsible for
the larger recourse value is therefore added to the generated scenario set:
\begin{equation}
\widehat{\mathcal{U}}^{\,k+1}
=
\widehat{\mathcal{U}}^{\,k}
\cup
\{\bar\xi^{\,k}\}.
\label{eq:scenario_update}
\end{equation}
A scenario-specific copy of the recourse variables and the corresponding
instances of Constraints~\eqref{eq:rmp_recourse} and~\eqref{eq:rmp_eta}
associated with $\bar\xi^{\,k}$ are added to the RMP. The enlarged master problem is then reoptimized, allowing the first-stage configuration to adapt to the newly identified worst-case realization.

In exact arithmetic, the absence of a violated realization is characterized
by $\mathcal{Q}(\bar y^{k},\bar v^{k})=\bar\eta^{\,k}$.
In this case, the restricted master problem already captures the worst-case
recourse value of its current first-stage solution over the complete
uncertainty set. Indeed, using
$LB_k=C^{R}(\bar y^{k},\bar v^{k})+\bar\eta^{\,k}$ and
\eqref{eq:ccg_candidate_value}, this equality implies
$LB_k=\widehat Z_k$. Since $\widehat Z_k$ is the full robust objective value
of the feasible first-stage configuration $(\bar y^{k},\bar v^{k})$, we
have $Z^{F}\leq\widehat Z_k$. Consequently,
$LB_k\leq Z^{F}\leq\widehat Z_k=LB_k$, and hence
$Z^{F}=LB_k=\widehat Z_k$. Therefore,
$(\bar y^{k},\bar v^{k})$ is optimal for~\eqref{eq:robust_model} in the
exact-solution case.
For numerical implementation, termination is based on the lower and
incumbent upper bounds rather than exact equality. At iteration $k$, the relative CCG optimality gap is defined as
\begin{equation}
\operatorname{Gap}_k
=
\frac{UB_k-LB_k}
{\max\{1,|UB_k|\}},
\label{eq:ccg_gap}
\end{equation}
where the denominator normalizes the absolute bound difference by the
magnitude of the incumbent upper bound while preventing division by zero
when $UB_k$ is zero or close to zero. The procedure terminates when
$\operatorname{Gap}_k\leq\varepsilon$, where $\varepsilon>0$ is the
prescribed CCG optimality tolerance.
Otherwise, the newly identified
realization is incorporated through~\eqref{eq:scenario_update}, and the next
CCG iteration is performed. The complete procedure is summarized in
Algorithm~\ref{alg:ccg}.
The CCG procedure is initialized with the nominal realization
$\xi^{1}=\mathbf{0}\in\mathcal{U}^{F}$, for which
$g=a=b=d=z=q=h=0$. This corresponds to the absence of country-level
geopolitical shocks and all associated network-level consequences.
\begin{algorithm}[htbp]
\caption{Column-and-constraint generation (CCG) procedure}
\label{alg:ccg}
\begin{algorithmic}[1]

\State \textbf{Input:} CCG tolerance $\varepsilon>0$

\State \textbf{Initialize:}
$k\gets1$,
$\xi^{1}\gets\mathbf{0}$,
$\widehat{\mathcal{U}}^{\,1}\gets\{\xi^{1}\}$,
$UB_0\gets+\infty$

\State \textbf{Output:}
Incumbent first-stage configuration $(y^{*},v^{*})$, bounds
$LB_k\leq Z^{F}\leq UB_k$ on the optimal objective value of
\eqref{eq:robust_model}, and relative optimality gap
$\operatorname{Gap}_k$

\Procedure{CCG}{}

    \While{true}

        \State
        $\bigl(Z_{\mathrm{RMP}}^{k}, (\bar y^{k},\bar v^{k},\bar\eta^{\,k})\bigr)
        \gets
        \text{Solve }\operatorname{RMP} \text{for } \widehat{\mathcal{U}}^{\,k}$
        using~\eqref{eq:rmp_obj}--\eqref{eq:rmp_eta}
        \label{step:ccg_rmp}
        \vspace{0.3em}
        \State
        $LB_k\gets Z_{\mathrm{RMP}}^{k}$
        \label{step:ccg_lb}
        \vspace{0.3em}
        \State
        $(\mathcal{Q}(\bar y^{k},\bar v^{k}), \bar\xi^{\,k})
        \gets
        \text{Solve adversarial problem (AP) for } (\bar y^{k},\bar v^{k})$
        using \eqref{eq:adversarial_problem}
        \label{step:ccg_ap}
        \vspace{0.3em}
        \State
        $\widehat Z_k
        \gets
        C^{R}(\bar y^{k},\bar v^{k})
        +
        \mathcal{Q}(\bar y^{k},\bar v^{k})$ as in \eqref{eq:ccg_candidate_value}
        \label{step:ccg_candidate}

        \If{$\widehat Z_k<UB_{k-1}$}
            \State $UB_k\gets\widehat Z_k$
            \State $(y^{*},v^{*})\gets(\bar y^{k},\bar v^{k})$
        \Else
            \State $UB_k\gets UB_{k-1}$
        \EndIf
        \label{step:ccg_ub}

        \State
        $\operatorname{Gap}_k
        \gets
        (UB_k-LB_k)/\max\{1,|UB_k|\}$
        \label{step:ccg_gap}

        \If{$\operatorname{Gap}_k\leq\varepsilon$}
            \State \textbf{Return} $(y^{*},v^{*})$, $LB_k$, $UB_k$,
$\operatorname{Gap}_k$
        \EndIf

        \State
        $\widehat{\mathcal{U}}^{\,k+1}
        \gets
        \widehat{\mathcal{U}}^{\,k}
        \cup
        \{\bar\xi^{\,k}\}$
        \label{step:ccg_scenario}

        \State $k\gets k+1$

    \EndWhile

\EndProcedure

\end{algorithmic}
\end{algorithm}
The finite cardinality of $\mathcal{U}^{F}$ yields the following convergence
result.
\begin{theorem}
\label{thm:ccg_convergence}
Suppose that the restricted master problem and the adversarial problem are
solved to global optimality at every iteration. Since $\mathcal{U}^{F}$ is
finite, the CCG procedure terminates after finitely many iterations. In exact
arithmetic and with zero termination tolerance, the resulting first-stage
configuration is optimal for~\eqref{eq:robust_model}.
\end{theorem}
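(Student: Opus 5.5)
The argument is the standard finite-scenario CCG argument (cf.\ \cite{ZengZhao2013}), adapted to the exact-termination rule of Algorithm~\ref{alg:ccg}. First, $\mathcal{U}^{F}$ is finite: every component of $\xi=(g,a,b,d,z,q,h)$ is binary and the index sets $C,S,P,W,E,E^{I}$ are finite, so $|\mathcal{U}^{F}|\leq 2^{m}$, where $m$ is the total number of uncertainty variables.

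The key step is to show that at every nonterminating iteration the adversarial realization $\bar\xi^{\,k}$ is new, that is, $\bar\xi^{\,k}\notin\widehat{\mathcal{U}}^{\,k}$. I would argue by contraposition. Suppose $\bar\xi^{\,k}\in\widehat{\mathcal{U}}^{\,k}$. Because the RMP is solved to global optimality, $\bar\eta^{\,k}=\max_{\xi\in\widehat{\mathcal{U}}^{\,k}}Q(\bar y^{k},\bar v^{k},\xi)\geq Q(\bar y^{k},\bar v^{k},\bar\xi^{\,k})=\mathcal{Q}(\bar y^{k},\bar v^{k})$. Here the equality uses global optimality of the adversarial problem and strong duality, which holds for every pair by the feasibility and boundedness argument of Section~\ref{sec:dualization}. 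Combined with \eqref{eq:rmp_adversarial_relation}, this gives $\bar\eta^{\,k}=\mathcal{Q}(\bar y^{k},\bar v^{k})$, and hence $LB_k=\widehat Z_k$. Since $UB_k\leq\widehat Z_k$ by \eqref{eq:ccg_upper_bound} and $LB_k\leq Z^F\leq UB_k$, we get $UB_k=LB_k$. Then $\operatorname{Gap}_k=0\leq\varepsilon$, so the algorithm terminates at iteration $k$. This holds for $\varepsilon=0$ as well, since the termination test uses a non-strict inequality.

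It follows that every nonterminating iteration strictly enlarges $\widehat{\mathcal{U}}^{\,k}\subseteq\mathcal{U}^{F}$. Because $|\widehat{\mathcal{U}}^{\,1}|=1$, termination therefore occurs after at most $|\mathcal{U}^{F}|$ iterations.

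For optimality, suppose the algorithm terminates at iteration $k$ with $\operatorname{Gap}_k=0$. Then $UB_k=LB_k$, and the sandwich $LB_k\leq Z^F\leq UB_k$ gives $Z^F=UB_k$. The returned incumbent $(y^*,v^*)$ is the configuration whose $\widehat Z$ value equals $UB_k$; one checks this holds whether the incumbent was updated at iteration $k$ or earlier. Its full robust objective is $C^R(y^*,v^*)+\mathcal{Q}(y^*,v^*)=UB_k=Z^F$, so it is optimal for \eqref{eq:robust_model}.

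The main obstacle is the subtle step showing that a repeated scenario forces termination. It requires that $\bar\eta^{\,k}$ equals the maximum recourse value over the generated scenarios, rather than merely upper-bounding it, and that the adversarial value is attained by some $\bar\xi^{\,k}$. Attainment holds because the adversarial problem equals the maximum of finitely many finite LP values, again by strong duality.
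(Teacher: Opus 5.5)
Your proposal is correct and follows essentially the same route as the paper's proof. Both rest on the finiteness of $\mathcal{U}^{F}$ and on the observation that a repeated adversarial realization forces $\bar\eta^{\,k}=\mathcal{Q}(\bar y^{k},\bar v^{k})$, hence $LB_k=Z^{F}=\widehat Z_k$. Your version is slightly more complete on two points the paper leaves implicit: you show this case actually triggers the gap test via $UB_k\leq\widehat Z_k$, and you verify that the returned incumbent $(y^{*},v^{*})$, not just the current iterate, attains $Z^{F}$.
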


\begin{proof}
All uncertainty variables defining $\mathcal{U}^{F}$ are binary; hence,
$\mathcal{U}^{F}$ contains finitely many admissible realizations. At any
nonterminating iteration, if the worst-case realization $\bar\xi^{\,k}$
identified by the adversarial problem is not contained in
$\widehat{\mathcal{U}}^{\,k}$, it is added according to
Step~\ref{step:ccg_scenario} of Algorithm~\ref{alg:ccg}. Since
$\mathcal{U}^{F}$ is finite, the generated scenario set can be enlarged only
finitely many times.
If instead $\bar\xi^{\,k}\in\widehat{\mathcal{U}}^{\,k}$, then
Constraint~\eqref{eq:rmp_eta} implies $\bar\eta^{\,k}
\geq Q(\bar y^{k},\bar v^{k},\bar\xi^{\,k})
= \mathcal{Q}(\bar y^{k},\bar v^{k})$.
Together with~\eqref{eq:rmp_adversarial_relation}, which gives
$\bar\eta^{\,k}\leq\mathcal{Q}(\bar y^{k},\bar v^{k})$, it follows that $\bar\eta^{\,k}
= \mathcal{Q}(\bar y^{k},\bar v^{k})$. Consequently,
$LB_k=C^{R}(\bar y^{k},\bar v^{k})+\bar\eta^{\,k}
=\widehat Z_k$. Since $LB_k\leq Z^{F}\leq\widehat Z_k$, we obtain
$LB_k=Z^{F}=\widehat Z_k$. Thus, either a previously unrepresented
realization is added to the RMP or the current first-stage configuration is
optimal. Because only finitely many realizations can be added, the CCG
procedure terminates after finitely many iterations, and under exact
arithmetic with zero termination tolerance the returned first-stage
configuration is optimal for~\eqref{eq:robust_model}.
\end{proof}

Theorem~\ref{thm:ccg_convergence} establishes finite convergence under exact
solution of the RMP and adversarial problem. In the computational
implementation, both problems are solved subject to the solver tolerances
specified in Section~\ref{sec:computational}, and the CCG procedure is
terminated when the relative bound gap in~\eqref{eq:ccg_gap} satisfies the
prescribed tolerance $\varepsilon$.

%%%%%%%%%%%%%%%%%%%%%%%%%%%%%%%%%%%%%%%%%%%%%%%%%%%%%%%%%%%%%%%%%%%%%%%%%%%%%%%%
\section{Computational study and model evaluation}
\label{sec:computational}
%%%%%%%%%%%%%%%%%%%%%%%%%%%%%%%%%%%%%%%%%%%%%%%%%%%%%%%%%%%%%%%%%%%%%%%%%%%%%%%%

This section evaluates the proposed robust optimization framework and the CCG
solution procedure developed in Section~\ref{sec:reformulation}. The
computational study has four objectives. First, we assess the computational
performance and scalability of the CCG procedure over a collection of
synthetically generated four-echelon supply chain networks. Second, we examine
the value of explicitly accounting for multiple geopolitical transmission
channels by comparing strategic configurations obtained under the full
uncertainty set $\mathcal{U}^{F}$ and the restricted uncertainty environments
defined in Section~\ref{sec:robust}. Third, we investigate the sensitivity of
the resulting strategic configurations and worst-case operating performance
to the severity and composition of geopolitical uncertainty. Finally, a
case-based analysis evaluates the framework on a supply chain network
constructed from real-world data.

%%%%%%%%%%%%%%%%%%%%%%%%%%%%%%%%%%%%%%%%%%%%%%%%%%%%%%%%%%%%%%%%%%%%%%%%%%%%%%%%
\subsection{Experimental design and implementation}
\label{sec:experimental_design}
%%%%%%%%%%%%%%%%%%%%%%%%%%%%%%%%%%%%%%%%%%%%%%%%%%%%%%%%%%%%%%%%%%%%%%%%%%%%%%%%

The synthetic experiments preserve the four-echelon network structure and
country-aware uncertainty architecture introduced in
Sections~\ref{sec:network} and~\ref{sec:uncertainty}, while systematically
varying network size across the test bed. Six network configurations are
considered, as reported in Table~\ref{tab:instance_classes}. Each network
contains suppliers, manufacturing plants, warehouses, and retailers
distributed across four countries. The number of countries is held fixed
across the six configurations so that changes in computational performance
primarily reflect changes in the underlying supply chain network rather than
simultaneous changes in the dimension of the country-level uncertainty
structure.
For each configuration, ten independently generated instances are considered,
yielding 60 baseline instances in total. Random seeds 2001--2010 are used for
the ten replications of each configuration.
\begin{table}[htbp]
\centering
\caption{Synthetic supply chain network configurations.}
\label{tab:instance_classes}
\begin{tabular}{cccc|cccc}
\toprule
Class & Size group & $|C|$ & ($|S|$, $|P|$, $|W|$, $|R|$) & Class & Size group & $|C|$ & ($|S|$, $|P|$, $|W|$, $|R|$) \\
\midrule
S1 & Small  & 4 & (6, 3, 4, 8) & M2 & Medium & 4 & (9, 4, 6, 13)  \\
S2 & Small  & 4 & (7, 4, 5, 10) & L1 & Large  & 4 & (9, 5, 7, 14) \\
M1 & Medium & 4 & (8, 4, 6, 12) & L2 & Large  & 4 & (10, 5, 7, 15) \\
\bottomrule
\end{tabular}
\end{table}

\textbf{Network generation:} Nodes within each echelon are assigned approximately evenly across the four countries. The resulting country assignments determine the partition of transportation arcs into the international and domestic sets $E^{I}$ and $E^{D}$ defined in~\eqref{eq:international_domestic_link}. Transportation
arcs between consecutive echelons are generated with probabilities $0.40$,
$0.45$, and $0.35$ for $E^{SP}$, $E^{PW}$, and $E^{WR}$, respectively.
The generation procedure additionally ensures that every upstream node has at
least one outgoing arc and every downstream node has at least one incoming
arc.
To maintain comparable demand and capacity conditions as network size
increases, demand and facility capacities are generated relative to aggregate
network demand. Let $D^{\mathrm{tot}}=\sum_{r\in R}D_r$. Total demand is set to
$D^{\mathrm{tot}}=75|R|$.
For each retailer $r\in R$, an independent weight
$\widetilde D_r\sim U[0.75,1.25]$ is generated, and retailer demand is
obtained by normalization: $D_r = D^{\mathrm{tot}}
\frac{\widetilde D_r} {\sum_{r'\in R}\widetilde D_{r'}}$. Thus, average demand per retailer remains constant across network sizes while heterogeneity in retailer demand is preserved.
Facility capacities are generated analogously. Independent weights drawn from
$U[0.75,1.25]$ are normalized within each echelon such that $\sum_{s\in S}U_s=1.50D^{\mathrm{tot}}$, $\sum_{p\in P}U_p=1.35D^{\mathrm{tot}}$, and $\sum_{w\in W}U_w=1.35D^{\mathrm{tot}}$. Accordingly, aggregate capacity-to-demand ratios are held fixed across the six network configurations, while individual facility capacities remain heterogeneous.
Transportation-arc capacities are generated relative to the capacities or
demands associated with their incident nodes. For $(s,p)\in E^{SP}$ and
$(p,w)\in E^{PW}$, $u_{sp} = \vartheta_{sp}\min\{U_s,U_p\}$, $u_{pw} = \vartheta_{pw}\min\{U_p,U_w\}$, where $\vartheta_{sp},\vartheta_{pw}\sim U[0.50,1.00]$. For $(w,r)\in E^{WR}$, $u_{wr}=\vartheta_{wr}D_r$, $\vartheta_{wr}\sim U[1.00,1.50]$.
Because aggregate echelon capacity alone does not ensure that a randomly
generated sparse network can route all retailer demand, each generated
instance is subjected to a nominal serviceability test before being admitted
to the test bed. Specifically, all suppliers are qualified, all generated
transportation arcs are activated, and the nominal uncertainty realization
$\xi=\mathbf{0}$ is imposed. A linear program minimizing aggregate unmet
demand is then solved over the resulting network. An instance is retained only
when zero unmet demand can be achieved within a tolerance of $10^{-6}$;
otherwise, the network topology and transportation-arc capacities are
regenerated. This screening step ensures that service losses observed in the
robust experiments are not caused by an intrinsically incapable nominal
network.

\textbf{Cost and trade-policy parameters:} The parameter-generation rules used in the baseline experiments are reported in Table~\ref{tab:baseline_parameters}. The notation follows
Section~\ref{sec:network}. Parameters are independently sampled from the
specified uniform distributions after an admissible network has been
generated. For each international arc $(i,j)\in E^{I}$, the elevated tariff
rate is obtained as
$\overline{\tau}_{ij}
=\underline{\tau}_{ij}+\Delta\tau_{ij}$, consistent with
\eqref{eq:tariff_equation}.

\begin{table}[htbp]
\centering
\caption{Baseline parameter-generation scheme.}
\label{tab:baseline_parameters}
\begin{tabular}{lll}
\toprule
Parameter & Interpretation & Generation rule \\
\midrule
$\pi_r$ & Unit unmet-demand penalty & $U[40,70]$ \\
$c_{ij}$ & Unit transportation cost & $U[2,10]$ \\
$F_s$ & Supplier qualification cost & $U[100,200]$ \\
$K_{ij}$ & Transportation-arc activation cost & $U[20,60]$ \\
$V_{ij}$ & Customs value per unit, $(i,j)\in E^{I}$ & $U[15,40]$ \\
$\underline{\tau}_{ij}$ & Baseline tariff rate & $U[0.02,0.10]$ \\
$\Delta\tau_{ij}$ & Tariff-rate increment & $U[0.05,0.20]$ \\
$\delta_{ij}$ & Cross-border capacity-reduction fraction & $U[0.20,0.60]$ \\
\bottomrule
\end{tabular}
\end{table}

\textbf{Baseline uncertainty specification:} The baseline experiments use the full multi-channel uncertainty set $\mathcal{U}^{F}$ defined in~\eqref{eq:uncertainty_set}. The country-level
budget is set to $\Gamma_G=1$. The channel-specific budgets in
\eqref{eq:facility_budgets}--\eqref{eq:trade_budgets} are set to $\Gamma_S=\Gamma_P=\Gamma_W
=\Gamma_{SP}=\Gamma_{PW}=\Gamma_{WR}
=\Gamma_T=\Gamma_H=1$. For the aggregate uncertainty budget in~\eqref{eq:joint_budget}, the baseline specification is $\omega_F=2$, $\omega_T=1$, $\omega_H=1$, $\Gamma_J=3$. Under this specification, a physical facility or transportation-arc disruption consumes two units of the joint uncertainty budget, whereas a tariff escalation or policy-induced cross-border capacity restriction consumes one unit. Together with the channel-specific budgets, the joint budget controls both the composition and aggregate severity of the network-level consequences induced by a country-level geopolitical shock.
The baseline specification is used as a controlled stress environment rather
than as an empirical estimate of geopolitical-event probabilities.
Alternative specifications of the aggregate uncertainty budget are examined
subsequently to assess the sensitivity of the results to the assumed
composition and severity of admissible geopolitical realizations.

\textbf{Implementation and performance measures:} The CCG procedure in Algorithm~\ref{alg:ccg} is initialized with the nominal
realization $\xi^{1}=\mathbf{0}$ and terminated when the relative optimality
gap in~\eqref{eq:ccg_gap} satisfies $\operatorname{Gap}_k\leq10^{-5}$. The Gurobi relative MIP optimality gap is set to $10^{-6}$, while the feasibility, integer-feasibility, and optimality tolerances are set to $10^{-7}$. All optimization problems are solved using a single computational thread.
The AP in~\eqref{eq:adversarial_problem} is solved directly in its
binary--continuous bilinear form using Gurobi's global nonconvex optimization
capability. %Consistent with the reformulation in Section~\ref{sec:dualization}, no artificial finite bounds are imposed on the recourse-dual variables for the purpose of big-$M$ linearization. 
At each CCG iteration, the AP objective value is independently checked against the value obtained by solving the primal recourse problem for the uncertainty
realization returned by the AP.

For each instance, we record the final lower and upper bounds $LB_k$ and
$UB_k$, the absolute and relative optimality gaps, the number of CCG
iterations and generated scenarios, the cumulative RMP and AP solution times,
and the total solution time. We additionally record the first-stage
reconfiguration cost $C^{R}(y,v)$, worst-case recourse cost, numbers and
proportions of qualified suppliers and activated transportation arcs, the
composition of the worst-case uncertainty realization, aggregate worst-case
unmet demand, and the corresponding service rate, $1-
\frac{\sum_{r\in R}\ell_r} {\sum_{r\in R}D_r}$. These measures are used in the subsequent analyses to assess computational scalability, strategic network configuration, and operational performance under multi-channel geopolitical uncertainty.

%%%%%%%%%%%%%%%%%%%%%%%%%%%%%%%%%%%%%%%%%%%%%%%%%%%%%%%%%%%%%%%%%%%%%%%%%%%%%%%%
\subsection{Computational performance of the CCG procedure}
\label{sec:ccg_performance}
%%%%%%%%%%%%%%%%%%%%%%%%%%%%%%%%%%%%%%%%%%%%%%%%%%%%%%%%%%%%%%%%%%%%%%%%%%%%%%%%

\textbf{Validation against complete enumeration:}
Before examining computational scalability, we independently validate the CCG
procedure against complete enumeration on a collection of small instances.
Three validation configurations are considered: $V1=(2,3,2,2,3)$, $V2=(3,4,2,3,4)$, $V3=(3,4,3,3,5)$, where each tuple reports the numbers of countries, suppliers, plants, warehouses, and retailers, respectively. Three independently generated instances are considered for each configuration, using random seeds 3001--3003.
For each validation instance, all admissible network-level consequence
realizations $(a,b,d,z,q,h)$ under the baseline uncertainty specification are
enumerated. Different country-shock vectors $g$ that induce the same
network-level consequence realization are represented only once, since $g$
determines the admissibility of the consequences through
\eqref{eq:country_facility}--\eqref{eq:country_tradecapacity} but does not
otherwise enter the operational recourse problem. The complete set of unique
admissible consequence realizations is then incorporated simultaneously into
the master formulation, yielding a deterministic-equivalent robust problem.
Let $Z_{\mathrm{Enu}}$ denote its optimal objective value. The same generated
instance is independently solved using Algorithm~\ref{alg:ccg}, producing
$Z_{\mathrm{CCG}}$.
Table~\ref{tab:enumeration_validation} reports the comparison. The relative
discrepancy is computed as $\Delta_{\mathrm{rel}} = \frac{
|Z_{\mathrm{CCG}}-Z_{\mathrm{Enu}}| }{
\max\{1,|Z_{\mathrm{Enu}}|\} }$.
\begin{table}[htbp]
\centering
\caption{Validation of CCG against complete enumeration.}
\label{tab:enumeration_validation}
\begin{tabular}{cc|cc}
\toprule
Class & S-Enu $\mid$ S-CCG $\mid$ $Z_{\mathrm{Enu}}$ $\mid$  $Z_{\mathrm{CCG}}$ & Class & S-Enu $\mid$ S-CCG $\mid$ $Z_{\mathrm{Enu}}$ $\mid$  $Z_{\mathrm{CCG}}$\\
\midrule
V1 (3001) & 270 $\mid$ 4 $\mid$ 9840.50  $\mid$ 9840.50 & V2 (3001) & 551 $\mid$ 6 $\mid$ 14436.50 $\mid$ 14436.50 
\\
V1 (3002) & 178 $\mid$ 3 $\mid$ 9720.21  $\mid$ 9720.21 & V2 (3002) & 554 $\mid$ 5 $\mid$ 11467.06 $\mid$ 11467.06 
\\
V1 (3003) & 234 $\mid$ 9 $\mid$ 10971.09 $\mid$ 10971.09 &  V2 (3003) & 271 $\mid$ 4 $\mid$ 12712.40 $\mid$ 12712.40 
\\
\hline
V3 (3001) & 668 $\mid$ 5 $\mid$ 15461.49 $\mid$ 15461.49 & V3 (3003) & 839 $\mid$ 9 $\mid$ 13795.84 $\mid$ 13795.84 
\\
V3 (3002) & 651 $\mid$ 8 $\mid$ 18315.69 $\mid$ 18315.69 &  &
\\
\bottomrule
\end{tabular}
\end{table}
The CCG and complete-enumeration objective values agree to numerical precision
for all nine validation instances. %The relative discrepancy is $0$. 
The corresponding CCG lower and upper bounds also
coincide with the complete-enumeration objective values within numerical
precision. These results provide an independent computational verification of
the decomposition procedure on instances for which the complete robust problem
can be explicitly constructed.
The comparison also illustrates the scenario-generation advantage of CCG.
The number of unique admissible consequence realizations ranges from 178 to
839 across the validation instances, whereas CCG requires only 3--9 scenarios
to establish the robust optimum. In aggregate, the nine validation instances
contain 4,216 admissible consequence realizations, while CCG uses only 53
generated scenarios, corresponding to approximately $1.26\%$ of the
enumerated realizations. Thus, the CCG procedure recovers the
complete-enumeration optimum without explicitly incorporating the large
majority of admissible geopolitical realizations.

\textbf{Scalability across the synthetic test bed:} Having validated the CCG procedure against complete enumeration, we next
evaluate its computational performance over the 60 baseline instances
defined in Section~\ref{sec:experimental_design}. All instances satisfy
the prescribed CCG stopping criterion in~\eqref{eq:ccg_gap}.
Table~\ref{tab:ccg_performance} summarizes the computational results
across the six network configurations. Each entry represents an average
over ten independently generated instances unless otherwise indicated.
The table reports network size, CCG iterations, cumulative solution times
for the restricted master problem (RMP) and adversarial problem (AP),
and total solution time. Median and maximum solution times are also
reported to characterize variation across instances.
The results show that computational effort generally increases with
network size, although the relationship is not strictly monotonic.
This suggests that computational difficulty depends not only on network
dimensions but also on characteristics of individual instances,
including network connectivity and the uncertainty realizations
identified during CCG.
The distribution of computational effort between the RMP and AP also
changes with network size. While the AP accounts for a substantial
proportion of solution time in smaller networks, the RMP becomes the
dominant computational component in larger configurations. This is
consistent with Algorithm~\ref{alg:ccg}, where each additional
uncertainty realization introduces new recourse variables and
constraints into the RMP, increasing its size and computational burden.
Despite this growth, the number of CCG iterations remains moderate
across the tested configurations. Together with the complete
enumeration results, these findings indicate that the procedure
obtains solutions satisfying the prescribed numerical tolerance
without explicitly representing all admissible uncertainty
realizations. The increasing computational burden of the RMP also
suggests that improving its solution efficiency may be particularly
valuable for larger networks.
%
\begin{comment}
\paragraph{Scalability across the synthetic test bed.}
Having validated the CCG procedure against complete enumeration, we next
evaluate its computational performance over the 60 baseline instances defined
in Section~\ref{sec:experimental_design}. All 60 instances converge according
to the stopping criterion in~\eqref{eq:ccg_gap}. The largest final relative
optimality gap observed over the complete test bed is
$1.56\times10^{-15}$, which is substantially below the prescribed CCG
tolerance of $10^{-5}$.
%
%
%
%
Table~\ref{tab:ccg_performance} summarizes computational performance by
network configuration. Each entry is based on ten independently generated
instances. In addition to average solution time, the median and maximum are
reported because the larger configurations exhibit substantial
instance-to-instance variation in computational difficulty.
\end{comment}
\begin{table}[htbp]
\centering
\caption{Computational performance of CCG across the synthetic test bed.}
\label{tab:ccg_performance}
\begin{tabular}{lrrrrrrr}
\toprule
Class &
Avg.\ $|E|$ &
Avg.\ iter. &
Avg.\ RMP &
Avg.\ AP &
Avg.\ total &
Median total &
Max total \\
&
&
&
(s) &
(s) &
(s) &
(s) &
(s) \\
\midrule
S1 & 29.0 & 6.6  & 0.22  & 0.61  & 0.83   & 0.82  & 1.29   \\
S2 & 42.1 & 9.4  & 1.67  & 2.43  & 4.10   & 3.93  & 8.78   \\
M1 & 53.0 & 10.5 & 5.92  & 5.16  & 11.08  & 7.24  & 23.94  \\
M2 & 58.1 & 8.6  & 3.36  & 5.15  & 8.51   & 7.98  & 16.53  \\
L1 & 73.0 & 12.1 & 92.22 & 9.56  & 101.79 & 35.49 & 311.54 \\
L2 & 74.8 & 12.5 & 66.75 & 14.30 & 81.05  & 47.22 & 293.37 \\
\bottomrule
\end{tabular}
\end{table}
\subsection{Value of multi-channel geopolitical protection}
\label{sec:multichannel_value}
%%%%%%%%%%%%%%%%%%%%%%%%%%%%%%%%%%%%%%%%%%%%%%%%%%%%%%%%%%%%%%%%%%%%%%%%%%%%%%%%

We next examine the value of incorporating multiple geopolitical transmission
channels when determining the strategic network configuration. The comparison
uses the tariff-only, trade-policy, physical-disruption, and full
multi-channel uncertainty sets defined in
\eqref{eq:tariff_only_set}--\eqref{eq:disruption_only_set} and
\eqref{eq:uncertainty_set}. As established by
Proposition~\ref{prop:nesting}, these uncertainty environments preserve the
same network, first-stage decision structure, and operational recourse model,
but differ in the channels through which country-level shocks may affect the
network.

\textbf{Common full uncertainty stress test:} For each of the 60 baseline instances, we independently optimize the strategic configuration under the four uncertainty environments defined in Section~\ref{sec:robust}. Let $(y^{m},v^{m})$ denote an optimal configuration obtained under environment $m\in\{T,TP,D,F\}$.
As established by Proposition~\ref{prop:nesting}, the optimal robust
objective values depend on the uncertainty sets considered during
strategic design. Comparing these values alone therefore does not
establish how the resulting configurations perform under the same
geopolitical conditions.
To make this comparison, we fix each strategic configuration and
evaluate it under the full uncertainty set $\mathcal{U}^{F}$. The
corresponding total cost is
\begin{equation}
Z_{\mathrm{stress}}^{m}
=
C^{R}(y^{m},v^{m})
+
\max_{\xi\in\mathcal{U}^{F}}
Q(y^{m},v^{m},\xi),
\qquad
m\in\{T,TP,D,F\},
\label{eq:common_stress_cost}
\end{equation}
where $C^{R}(\cdot)$ and $Q(\cdot)$ are defined in
\eqref{eq:config_cost} and \eqref{eq:recourse_obj}, respectively.
This common evaluation isolates the consequences of excluding
particular geopolitical transmission channels during strategic design.
For each restricted environment $m\in\{T,TP,D\}$, the relative
cost penalty is defined as $\operatorname{Penalty}^{m} = 100 \Bigl(\frac{
Z_{\mathrm{stress}}^{m} - Z_{\mathrm{stress}}^{F}}{Z_{\mathrm{stress}}^{F}
}\Bigr)$.
We also measure the difference in service rate relative to the
configuration designed under $\mathcal{U}^{F}$.
Table~\ref{tab:multichannel_stress} summarizes the average results
across the 60 paired instances. The design objective reports the
optimal robust objective under the uncertainty environment used
for strategic design, while the first stage cost represents the
expenditure required to establish the corresponding network
configuration. The full stress total cost measures the combined
strategic and worst case operating costs when that configuration
is exposed to $\mathcal{U}^{F}$. The cost penalty expresses the
relative increase in this total cost compared with the configuration
designed under the full uncertainty set. The service rate measures
the proportion of demand satisfied under the selected worst case
cost realization, while service loss reports the corresponding
difference from the full model configuration in percentage points.
\begin{table}[htbp]
\centering
\caption{Performance of strategic configurations under common
full geopolitical stress.}
\label{tab:multichannel_stress}
\begin{tabular}{lrrrrrr}
\toprule
Design &
Design &
First stage &
Full stress &
Cost &
Service &
Service loss \\
environment &
objective &
cost &
total cost &
penalty (\%) &
rate (\%) &
(pp) \\
\midrule
$\mathcal{U}^{T}$  & 19510.78 & 2265.66 & 29085.83 & 12.06 & 66.91 & 8.82 \\
$\mathcal{U}^{TP}$ & 20474.17 & 2325.87 & 28355.76 &  9.19 & 67.81 & 7.91 \\
$\mathcal{U}^{D}$  & 25011.87 & 2343.63 & 26587.21 &  2.29 & 72.63 & 3.10 \\
$\mathcal{U}^{F}$  & 25982.63 & 2368.25 & 25982.63 &  0.00 & 75.73 & 0.00 \\
\bottomrule
\end{tabular}
\end{table}

The results reveal an important distinction between the cost of
designing against uncertainty and the cost of operating under
geopolitical stress. Restricted uncertainty environments produce
lower design objectives because fewer adverse consequences are
considered. However, these apparent cost advantages do not
persist when the resulting configurations are evaluated under
the full uncertainty set. Although the configuration designed
under $\mathcal{U}^{F}$ has the highest design objective, it
achieves the lowest total cost under common geopolitical stress.
This distinction demonstrates why strategic configurations
developed under different uncertainty assumptions should be
evaluated against a common uncertainty environment.
The comparison also reveals substantial differences in the
protection provided by individual uncertainty channels.
Configurations designed only against tariff escalation exhibit
the greatest cost deterioration. Incorporating restrictions on
international transportation capacity improves performance,
indicating that accounting for changes in trade costs alone
provides limited protection when geopolitical events also
restrict physical flows. Configurations designed against
facility and transportation disruptions perform considerably
closer to those obtained under the full uncertainty set.
This suggests that accounting for physical disruptions provides
broader protection against omitted geopolitical consequences
than accounting for trade policy uncertainty alone, although
the additional channels remain relevant.

\textbf{Cost and service consequences of omitted channels:}
The differences in total cost arise despite relatively small
variations in strategic reconfiguration expenditure. As shown in
Table~\ref{tab:multichannel_stress}, configurations designed under
the full uncertainty set incur slightly higher initial costs but
achieve substantially lower worst case operating costs under the
common stress test. The reduction in operating costs more than
compensates for the additional strategic expenditure. This
indicates that relatively modest investments in network
configuration can provide substantial protection against the
operational consequences of omitted geopolitical uncertainty
channels.
The service results reveal a similar pattern. Configurations
designed under restricted uncertainty environments exhibit lower
average service rates under the common stress test, with the
greatest deterioration observed for designs considering only
tariff escalation. Thus, the consequences of omitting uncertainty
channels extend beyond additional operating costs to reduced
demand fulfillment. As defined in
Section~\ref{sec:experimental_design}, the reported service rates
correspond to the operational responses under the selected
worst case cost realizations.
The differences between restricted uncertainty environments also
suggest an asymmetry in the protection provided by different
channels. Configurations designed against physical disruptions
retain considerably greater protection under full geopolitical
stress than those designed only against trade policy consequences.
One possible explanation is that protection against facility and
transportation failures creates alternative network connections
that can also accommodate changes in trade costs and usable
international transportation capacity. Conversely, configurations
designed only against trade policy uncertainty remain more
exposed when geopolitical shocks also affect the physical
availability of the network. These findings suggest that
protection against physical disruptions provides partial
protection against other geopolitical consequences, although
explicit consideration of trade policy uncertainty remains
valuable.
The strategic configurations provide further insight into this
pattern. Across the 60 instances, the average number of activated
transportation arcs increases from $32.68$ under $\mathcal{U}^{T}$
to $36.68$ under $\mathcal{U}^{F}$, while the corresponding average
number of qualified suppliers decreases slightly from $6.48$ to
$6.25$. The additional protection associated with broader
uncertainty environments is therefore accompanied primarily by
changes in transportation network configuration rather than
increased supplier qualification. This suggests that
transportation redundancy plays an important role in improving
resilience to multiple geopolitical consequences in the tested
networks.

\textbf{Consistency across network configurations:} Table~\ref{tab:multichannel_byclass} reports the average cost
penalties and service losses under the common full uncertainty
stress test for each of the six network configurations. This
comparison examines whether the aggregate findings in
Table~\ref{tab:multichannel_stress} persist across networks
of different sizes.
\begin{table}[htbp]
\centering
\caption{Full-stress performance loss of restricted strategic configurations
by network class.}
\label{tab:multichannel_byclass}
\begin{tabular}{lrrrrrr}
\toprule
&
\multicolumn{2}{c}{$\mathcal{U}^{T}$} &
\multicolumn{2}{c}{$\mathcal{U}^{TP}$} &
\multicolumn{2}{c}{$\mathcal{U}^{D}$} \\
\cmidrule(lr){2-3}
\cmidrule(lr){4-5}
\cmidrule(lr){6-7}
Class &
Cost penalty &
Service loss &
Cost penalty &
Service loss &
Cost penalty &
Service loss \\
&
(\%) & (pp) &
(\%) & (pp) &
(\%) & (pp) \\
\midrule
S1 &  6.65 &  2.20 &  5.13 &  2.50 & 1.78 & 1.51 \\
S2 & 13.75 &  7.61 &  9.38 &  7.29 & 2.03 & 4.02 \\
M1 & 12.03 &  9.55 &  9.44 &  9.32 & 2.12 & 6.19 \\
M2 & 11.87 &  9.48 & 10.37 &  7.94 & 2.81 & 2.28 \\
L1 & 14.75 & 13.78 & 10.29 & 11.68 & 2.30 & 1.40 \\
L2 & 13.31 & 10.30 & 10.51 &  8.73 & 2.70 & 3.19 \\
\bottomrule
\end{tabular}
\end{table}
The results reveal a consistent ordering of cost penalties
across all six network configurations. Designs accounting
only for tariff escalation experience the greatest cost
deterioration, followed by those incorporating both trade
policy channels. Configurations designed against physical
disruptions remain closest to the full model in every network
class. Although the magnitude of these penalties varies,
the ordering is preserved across the synthetic test bed.
Service losses exhibit greater variation and do not always
follow the same ordering as cost penalties. Nevertheless,
all three restricted uncertainty environments exhibit
positive average service losses across the six network
classes. These findings indicate that the consequences of
omitting geopolitical transmission channels extend across
the tested network configurations, although their magnitude
depends on the characteristics of individual networks.

%%%%%%%%%%%%%%%%%%%%%%%%%%%%%%%%%%%%%%%%%%%%%%%%%%%%%%%%%%%%%%%%%%%%%%%%%%%%%%%%
\subsection{Sensitivity to the structure and severity of geopolitical uncertainty}
\label{sec:sensitivity}
%%%%%%%%%%%%%%%%%%%%%%%%%%%%%%%%%%%%%%%%%%%%%%%%%%%%%%%%%%%%%%%%%%%%%%%%%%%%%%%%

We examine how the robust solution responds to changes in the
aggregate uncertainty budget, the relative contributions of
individual consequence channels, and the severity of trade policy
consequences. The analysis considers the M1 and M2 network
configurations in Table~\ref{tab:instance_classes}, using the
same ten random seeds as the baseline experiments. Each parameter
setting is therefore evaluated on 20 common instances, with all
other parameters retained at their baseline values unless
otherwise specified.

\textbf{Aggregate geopolitical severity:} We vary the joint uncertainty budget in~\eqref{eq:joint_budget}
over $\Gamma_J\in\{1,2,3,4,5\}$, retaining the baseline weights
$(\omega_F,\omega_T,\omega_H)=(2,1,1)$.
Table~\ref{tab:gamma_sensitivity} reports the average robust
objective, strategic reconfiguration cost, service rate, and
number of activated transportation arcs. The final three
columns report the average numbers of physical disruptions
($F$), tariff escalations ($T$), and trade capacity restrictions
($H$) in the selected worst case realizations.
\begin{table}[htbp]
\centering
\caption{Sensitivity to the joint uncertainty budget $\Gamma_J$.}
\label{tab:gamma_sensitivity}
\begin{tabular}{rrrrrrrr}
\toprule
$\Gamma_J$ &
$Z^{F}$ &
$C^{R}$ &
Service &
Activated &
$F$ &
$T$ &
$H$ \\
&
&
&
rate (\%) &
arcs &
&
&
\\
\midrule
1 & 20923.24 & 2319.90 & 96.28 & 34.35 & 0.00 & 0.10 & 0.90 \\
2 & 26147.52 & 2397.51 & 80.70 & 36.90 & 1.00 & 0.00 & 0.00 \\
3 & 27143.61 & 2432.60 & 77.28 & 37.70 & 1.00 & 0.10 & 0.90 \\
4 & 30450.84 & 2293.89 & 65.81 & 35.35 & 2.00 & 0.00 & 0.00 \\
5 & 31099.97 & 2327.35 & 63.82 & 36.65 & 2.00 & 0.45 & 0.55 \\
\bottomrule
\end{tabular}
\end{table}
As expected from the expansion of the admissible uncertainty set,
the robust objective increases with $\Gamma_J$. The observed service
rate also declines, indicating greater demand fulfillment losses
under the selected worst case cost realizations. Unlike the robust
objective, however, this service pattern is an empirical observation
rather than a consequence of uncertainty set nesting.
The composition of the worst case realizations illustrates how
the joint uncertainty budget influences the nature of geopolitical
exposure. Under the baseline weights in~\eqref{eq:joint_budget},
physical disruptions are excluded when $\Gamma_J=1$, but become
admissible as the budget increases. The selected realizations
increasingly involve physical disruptions, sometimes accompanied
by trade policy consequences. Thus, increasing the budget changes
not only the range of admissible uncertainty but also the
composition of the adverse scenarios selected by the model.
The strategic response is not monotone. Both the number of
activated transportation arcs and the associated reconfiguration
cost fluctuate as $\Gamma_J$ increases. These findings indicate
that greater geopolitical exposure does not necessarily require
more extensive network activation or higher strategic expenditure,
as changes in admissible disruption combinations can alter the
preferred strategic configuration.

\textbf{Relative composition of geopolitical consequences:}
We examine the sensitivity of the robust solution to the channel
weights in~\eqref{eq:joint_budget}. Holding $\Gamma_J=3$, we consider
four specifications:
$(\omega_F,\omega_T,\omega_H)\in
\{(1,1,1),(2,1,1),(1,2,1),(1,1,2)\}$.
These specifications allow us to examine how the relative budget
requirements of physical disruptions, tariff escalation, and trade
capacity restrictions affect the resulting solutions.
\begin{table}[htbp]
\centering
\caption{Sensitivity to the channel weights in the joint uncertainty budget.}
\label{tab:weight_sensitivity}
\begin{tabular}{rrrrrrrr}
\toprule
$(\omega_F,\omega_T,\omega_H)$ &
$Z^{F}$ &
$C^{R}$ &
Service &
Activated &
$F$ &
$T$ &
$H$ \\
&
&
&
rate (\%) &
arcs &
&
&
\\
\midrule
$(1,1,1)$ & 32610.83 & 2271.75 & 58.92 & 35.95 & 2.95 & 0.05 & 0.00 \\
$(2,1,1)$ & 27143.61 & 2432.60 & 77.28 & 37.70 & 1.00 & 0.10 & 0.90 \\
$(1,2,1)$ & 32609.49 & 2268.03 & 58.92 & 35.85 & 3.00 & 0.00 & 0.00 \\
$(1,1,2)$ & 32603.08 & 2270.07 & 58.92 & 35.90 & 3.00 & 0.00 & 0.00 \\
\bottomrule
\end{tabular}
\end{table}
Table~\ref{tab:weight_sensitivity} shows that the weight assigned
to physical disruptions has a substantial effect on the robust
solution. When $\omega_F=1$, the selected worst case realizations
are dominated by physical disruptions, and the resulting robust
objectives and service rates are nearly identical across the
three corresponding specifications. Increasing the weights
assigned to tariff escalation or trade capacity restrictions
has little effect in these cases because physical disruptions
account for most of the selected adverse consequences.
In contrast, the baseline specification $(2,1,1)$ limits the
number of physical disruptions that can occur within the
same joint budget. The selected worst case realizations
therefore contain fewer physical disruptions and generally
include a trade capacity restriction. This produces a
substantially lower robust objective and higher service
rate than the specifications assigning unit weight to
physical disruptions.
These findings demonstrate that the channel weights influence
not only the admissible combinations of geopolitical
consequences but also the resulting strategic and operational
performance. Their selection is therefore an important
modeling assumption, rather than a simple numerical scaling
choice. The sensitivity results emphasize the need to
interpret the weights as parameters governing the composition
of admissible uncertainty, rather than as empirically
estimated measures of geopolitical severity.

\textbf{Severity of trade policy consequences:} Finally, we examine how the magnitude of tariff escalation and restrictions on international transportation capacity affects the robust solution. Holding the uncertainty budgets and channel
weights at their baseline values, we independently scale the
tariff increments and capacity reduction fractions according to $\Delta\tau_{ij}^{(\kappa_T)} = \kappa_T\Delta\tau_{ij}$, and $\delta_{ij}^{(\kappa_H)}
= \kappa_H\delta_{ij}$, where $\kappa_T,\kappa_H\in\{0.5,1.0,1.5\}$.
The multiplier $1.0$ represents the baseline specification.
All resulting capacity reduction fractions remain within
$[0,1]$ for the parameter ranges considered. Table~\ref{tab:trade_severity} reports the average robust objective,
strategic reconfiguration cost, service rate under the selected
worst case cost realization, and number of activated transportation
arcs across the 20 common instances.
\begin{table}[htbp]
\centering
\caption{Sensitivity to the severity of the trade-policy consequences.}
\label{tab:trade_severity}
\begin{tabular}{llrrrr}
\toprule
Channel varied &
Multiplier &
$Z^{F}$ &
$C^{R}$ &
Service rate (\%) &
Activated arcs \\
\midrule
Tariff increment
& $0.5$ & 27121.62 & 2431.13 & 77.39 & 37.65 \\
& $1.0$ & 27143.61 & 2432.60 & 77.28 & 37.70 \\
& $1.5$ & 27184.68 & 2419.31 & 77.56 & 37.85 \\
\addlinespace
Capacity reduction
& $0.5$ & 26688.54 & 2407.45 & 79.37 & 37.35 \\
& $1.0$ & 27143.61 & 2432.60 & 77.28 & 37.70 \\
& $1.5$ & 27985.58 & 2408.49 & 73.35 & 37.45 \\
\bottomrule
\end{tabular}
\end{table}
Table~\ref{tab:trade_severity} reveals a substantial difference
in the sensitivity of the robust solution to the two trade
policy channels. Changes in tariff increments have a relatively
small effect on the robust objective and service rate over the
tested range. In contrast, increasing the severity of capacity
restrictions produces considerably larger increases in the
robust objective and reductions in service performance.
This difference reflects how the two channels affect operational
decisions. Tariff escalation increases the cost of using
international transportation arcs without directly reducing
their capacity, whereas trade capacity restrictions reduce
the quantity that can be transported through affected arcs.
Consequently, capacity restrictions can limit the network's
ability to satisfy demand, in addition to increasing operating
costs through changes in the available routing alternatives.
The results also show that the strategic response to changes
in trade policy severity is relatively limited under the
baseline uncertainty specification. Both strategic
reconfiguration expenditure and the number of activated
transportation arcs vary only modestly across the tested
multipliers. This suggests that the observed differences in
robust performance arise primarily from changes in the
operational consequences of the uncertainty channels rather
than extensive changes in strategic network configuration.

Overall, the sensitivity analysis demonstrates that the
effects of geopolitical uncertainty depend not only on the
number and composition of admissible consequences but also
on their magnitude. In particular, the stronger response to
capacity restrictions in the tested networks highlights the
importance of representing trade policy interventions through
both cost and capacity consequences, rather than through
tariff escalation alone.

%%%%%%%%%%%%%%%%%%%%%%%%%%%%%%%%%%%%%%%%%%%%%%%%%%%%%%%%%%%%%%%%%%%%%%%%%%%%%%%%
\subsection{Geographically grounded case study}
\label{sec:case_study}
%%%%%%%%%%%%%%%%%%%%%%%%%%%%%%%%%%%%%%%%%%%%%%%%%%%%%%%%%%%%%%%%%%%%%%%%%%%%%%%%

We conclude the computational study with a case study using geographic
and demand information from Ahmadi Digehsara et
al.~\cite{AhmadiDigehsara2025}. Their study uses one year of
international shipment records from the Panjiva database and provides
market locations, demand volumes, and candidate facility locations.
Because the underlying shipment records and a network representation
with four echelons are unavailable, we use this information to
construct a geographically grounded instance of our framework rather
than reproduce their original supply chain network.

\textbf{Case construction and parameterization:} The eleven markets and their reported demand values are shown in Table~\ref{tab:case_demand}. Aggregate demand across these markets is 20,693 units.
\begin{table}[htbp]
\centering
\caption{Market-demand data used in the case-based experiment.}
\label{tab:case_demand}
\begin{tabular}{cc|cc|cc}
\toprule
Market & Demand & Market & Demand & Market & Demand \\
\midrule
Australia  & 929  & US        & 8556  & UK         & 2601 \\
Brazil     & 429  & Hong Kong & 1190  & Turkey    & 125 \\
Canada     & 1907 & India     & 928  & Malaysia   & 1005 \\
Japan      & 1068 & Singapore & 1955 \\
\bottomrule
\end{tabular}
\end{table}
To construct the network defined in Section~\ref{sec:network},
we assign the reported candidate locations to the four
echelons. Bangladesh, Cambodia, India, Pakistan, Sri Lanka,
and Vietnam constitute the supplier set; Hong Kong, Singapore,
Turkey, and Egypt constitute the plant set; and Mexico,
Colombia, and Oman constitute the warehouse set. The eleven
markets in Table~\ref{tab:case_demand} form the retailer set.
Separate nodes are used when a geographic location performs
multiple functions. These functional assignments are specific
to the present study and do not represent classifications
reported in the source data.
The resulting case contains $|S|=6$, $|P|=4$, $|W|=3$, $|R|=11$. Unlike the synthetic test bed in Section~\ref{sec:experimental_design},
transportation connectivity is constructed using geographic proximity rather
than random arc generation. Great-circle distances between locations are computed using the haversine formula \cite{Eason2019}. Transportation arcs
between consecutive echelons are then constructed by connecting
each upstream facility to a prescribed number of its geographically
nearest downstream facilities. Specifically, each supplier is
connected to its three nearest plants, each plant to its three
nearest warehouses, and each warehouse to its seven nearest
retailers. Additional arcs are introduced when necessary to
ensure that every downstream node has at least one incoming
transportation connection. This construction yields a sparse geographically structured network while preserving the sequence $S\rightarrow P\rightarrow W\rightarrow R$.
Table~\ref{tab:case_parameter_sources} summarizes the sources and
construction methods for the case study parameters.
\begin{table}[htbp]
\centering
\caption{Data sources and parameterization of the case study.}
\label{tab:case_parameter_sources}
\begin{tabular}{p{0.30\textwidth}p{0.61\textwidth}}
\toprule
Case component & Source or construction \\
\midrule

Geographic locations &
Locations reported in \cite{AhmadiDigehsara2025}. \\

Market demand &
Demand values for eleven markets from \cite{AhmadiDigehsara2025}. \\

Candidate locations &
Facility locations reported in \cite{AhmadiDigehsara2025}. \\

Four-echelon functional roles &
Constructed for the present study to map the reported locations into the
$S$--$P$--$W$--$R$ architecture. \\

Transportation network &
Obtained via geographic proximity \& nearest location rule. \\

Transportation costs &
Derived from geographic distance and normalized to the cost range used in the
synthetic experiments. \\

Facility capacities &
Calibrated via aggregate demand and baseline capacity ratios. \\

Transportation capacities &
Calibrated via incident facility capacities or retailer demand. \\

Strategic costs &
Midpoints of the baseline ranges in Table~\ref{tab:baseline_parameters}. \\

Unmet demand penalty &
Midpoint of the baseline range in Table~\ref{tab:baseline_parameters}. \\

Trade parameters &
Midpoints of the baseline ranges in Table~\ref{tab:baseline_parameters}. \\

Geopolitical uncertainty &
Full uncertainty set defined in Section~\ref{sec:uncertainty}. \\
\bottomrule
\end{tabular}
\end{table}
Facility capacities follow the aggregate capacity ratios in
Section~\ref{sec:experimental_design}, with total capacities
equal to $1.50D^{\mathrm{tot}}$, $1.35D^{\mathrm{tot}}$, and
$1.35D^{\mathrm{tot}}$ for suppliers, plants, and warehouses,
respectively, where $D^{\mathrm{tot}}=20{,}693$.
Capacity is distributed equally among facilities within each
echelon. Transportation capacities are set to $0.75$ times
the smaller incident facility capacity for supplier to plant
and plant to warehouse arcs, and to $1.25D_r$ for warehouse
to retailer arcs.
For parameters unavailable from the source study, we use
the midpoint values of the corresponding baseline ranges
in Table~\ref{tab:baseline_parameters}. Specifically,
$\pi_r=55$, $F_s=150$, and $K_{ij}=40$. For international
arcs, $V_{ij}=27.5$, $\underline{\tau}_{ij}=0.06$,
$\Delta\tau_{ij}=0.125$, and $\delta_{ij}=0.40$.
The elevated tariff rate therefore satisfies
$\overline{\tau}_{ij}=0.185$ according to
\eqref{eq:tariff_equation}. Transportation costs preserve the geographic
ordering of the network by linearly normalizing great-circle distance to the
$[2,10]$ interval used in the synthetic experiments. %The case therefore combines source-derived geography and demand with transparent calibration of parameters that are unavailable from the source study.
The uncertainty budgets and channel weights retain the
baseline specification from Section~\ref{sec:experimental_design}.

\textbf{Multi-channel design comparison:}
We apply the common-stress evaluation in
Section~\ref{sec:multichannel_value} to the geographically
grounded network. Strategic configurations are optimized
separately under the four uncertainty environments and
subsequently evaluated under the full uncertainty set
$\mathcal{U}^{F}$, according to~\eqref{eq:common_stress_cost}.
Table~\ref{tab:case_results} reports the resulting design
objectives, full-stress costs, cost penalties, and service
performance.

\begin{table}[htbp]
\centering
\caption{Case-based comparison under common full multi-channel stress.}
\label{tab:case_results}
\begin{tabular}{lrrrrr}
\toprule
Design &
Design &
Full-stress &
Cost penalty &
Service &
Service loss \\
environment &
objective &
total cost &
(\%) &
rate (\%) &
(pp) \\
\midrule
$\mathcal{U}^{T}$  & 355740.34 & 702000.10 & 22.69 & 57.13 & 18.33 \\
$\mathcal{U}^{TP}$ & 374647.56 & 611000.49 &  6.79 & 69.70 &  5.77 \\
$\mathcal{U}^{D}$  & 572177.15 & 572177.15 &  0.00 & 75.46 &  0.00 \\
$\mathcal{U}^{F}$  & 572177.15 & 572177.15 &  0.00 & 75.46 &  0.00 \\
\bottomrule
\end{tabular}
\end{table}
The results are consistent with the general findings in
Section~\ref{sec:multichannel_value}. The configuration
designed under $\mathcal{U}^{T}$ exhibits the greatest
deterioration in cost and service performance when evaluated
under the full uncertainty set. Incorporating both trade
policy channels through $\mathcal{U}^{TP}$ substantially
reduces these losses, highlighting the importance of
accounting for restrictions on usable international
transportation capacity in addition to tariff escalation.

A distinctive finding of the geographic case study is that
the configurations designed under $\mathcal{U}^{D}$ and
$\mathcal{U}^{F}$ attain identical optimal robust objective
values and service rates under the common full uncertainty
stress test, despite differing in their activated transportation
arcs. This indicates that, for the present network and
parameterization, accounting for physical disruptions alone
is sufficient to attain the optimal robust objective of
the full model. This case-specific outcome complements the
synthetic results in Section~\ref{sec:multichannel_value},
where configurations designed under $\mathcal{U}^{D}$ incur
a positive average cost penalty under full geopolitical
stress. Together, these findings indicate that the
incremental value of incorporating trade policy consequences
alongside physical disruptions depends on the geographic
and structural characteristics of the supply chain network.

\textbf{Geographic structure of the robust configurations:}
The strategic configurations reveal differences in supplier
qualification and transportation network design. Configurations
developed under $\mathcal{U}^{T}$ and $\mathcal{U}^{TP}$
qualify five suppliers: Bangladesh, Cambodia, India, Pakistan,
and Vietnam. In contrast, configurations developed under
$\mathcal{U}^{D}$ and $\mathcal{U}^{F}$ qualify four suppliers:
Cambodia, India, Pakistan, and Vietnam. Thus, the broader
uncertainty environments do not lead to greater supplier
qualification in this case. Instead, the configurations
differ in their activated transportation corridors, consistent
with the strategic patterns observed in
Section~\ref{sec:multichannel_value}.
The common full uncertainty stress tests identify a
geographically concentrated vulnerability. For all four
strategic configurations, the selected worst case realization
involves a geopolitical shock in Oman, resulting in the
disruption of the Oman warehouse and a restriction on an
inbound international transportation corridor. The affected
corridor is Egypt--Oman for configurations developed under
$\mathcal{U}^{T}$ and $\mathcal{U}^{TP}$, and Hong Kong--Oman
for those developed under $\mathcal{U}^{D}$ and
$\mathcal{U}^{F}$. These consequences are consistent with
the country dependence constraints in
\eqref{eq:country_facility}--\eqref{eq:country_tradecapacity},
which permit multiple adverse consequences to originate
from the same country level shock.
The results identify the Oman warehouse as a critical
location under the selected worst case realizations,
while the affected inbound transportation corridor depends
on the strategic configuration. This illustrates how
country level geopolitical shocks can propagate through
different network components and how strategic decisions
influence the resulting geographic exposure.

%%%%%%%%%%%%%%%%%%%%%%%%%%%%%%%%%%%%%%%%%%%%%%%%%%%%%%%%%%%%%%%%%%%%%%%%%%%%%%%%
\section{Conclusions}
\label{sec:conclusion}
%%%%%%%%%%%%%%%%%%%%%%%%%%%%%%%%%%%%%%%%%%%%%%%%%%%%%%%%%%%%%%%%%%%%%%%%%%%%%%%%

This study develops a robust optimization framework for strategic
supply chain reconfiguration that captures the dependence among
multiple geopolitical consequences arising from common country
level shocks. The proposed uncertainty structure integrates
physical disruptions and trade policy interventions through
channel specific and joint uncertainty budgets, while its
nested restricted sets permit systematic evaluation of
individual transmission channels. The resulting two stage
robust optimization problem is addressed through recourse
dualization and a column and constraint generation procedure.
The computational results show that omitting geopolitical
consequence channels during strategic design can lead to
substantial increases in cost and reductions in service
under common full uncertainty stress. Configurations
designed against physical disruptions generally provide
greater protection against omitted channels than those
considering only trade policy uncertainty. Broader
protection is associated primarily with changes in
transportation network configuration rather than increased
supplier qualification. The sensitivity experiments
demonstrate that both the composition and severity of
geopolitical consequences influence robust performance,
with restrictions on international transportation capacity
having a stronger effect than comparable proportional
changes in tariff increments over the tested ranges.
The geographically grounded case study further shows
that the incremental value of individual channels
depends on network characteristics.

From a managerial perspective, the findings highlight
the importance of evaluating strategic configurations
against the combined consequences of geopolitical shocks.
In particular, alternative transportation connections
can provide operational flexibility without necessarily
requiring additional supplier qualification. The framework
also enables decision makers to identify vulnerabilities
that may be overlooked when geopolitical exposure is
evaluated through isolated trade policy or physical
disruption scenarios.
The analysis considers binary geopolitical consequences
with exogenously specified uncertainty budgets and
channel weights, fixed demand, and linear operating
costs. Future research could develop empirical
calibration methods for geopolitical uncertainty,
incorporate continuous or multiple disruption states,
and extend the framework to multistage settings with
evolving geopolitical conditions and strategic
reconfiguration decisions.

%%%%%%%%%%%%%%%%%%%%%%%%%%%%%%%%%%%%%%%%%%%%%%%%%%%%%%%%%%%%%%%%%%%%%%%%%%%%%%%%
\section*{Declarations}
%%%%%%%%%%%%%%%%%%%%%%%%%%%%%%%%%%%%%%%%%%%%%%%%%%%%%%%%%%%%%%%%%%%%%%%%%%%%%%%%

\noindent
\textbf{Conflict of interest:}
The author declares no conflicts of interest relevant to the
content of this article.

\medskip
\noindent
\textbf{Funding:}
No external funding was received for this research.

\medskip
\noindent
\textbf{Data availability:}
Computational data supporting the reported results are available from the corresponding author upon reasonable request.
\bibliography{sn-bibliography}

\end{document}